\newtheorem*{rep@theorem}{\rep@title}
\newcommand{\newreptheorem}[2]{%
\newenvironment{rep#1}[1]{%
 \def\rep@title{#2 \ref{##1}}%
 \begin{rep@theorem}}%
 {\end{rep@theorem}}}
\newtheorem{prop}[equation]{Proposition}
\newtheorem{theorem}[equation]{Theorem}
\newtheorem{cor}[equation]{Corollary}
\newtheorem{lem}[equation]{Lemma}
\theoremstyle{definition}
\newtheorem{remarks}[equation]{Remarks}
\numberwithin{equation}{section}
\newcommand{\R} {\mathbb{R}} \newcommand{\Z} {\mathbb{Z}} \newcommand{\C} {\mathbb{C}}
\providecommand*{\eu}%
{\ensuremath{\mathrm{e}}}
\providecommand*{\iu}%
{\ensuremath{\mathrm{i}}}
\title[Statistics in conjugacy classes]{Statistics in conjugacy classes in free groups}
\author{George Kenison}
\address{George Kenison, School of Mathematics, University of Bristol, University Walk, Bristol BS8 1TW, U.K.}
\email{george.kenison@bristol.ac.uk}
\author{Richard Sharp}
\address{Richard Sharp, Mathematics Institute, University of Warwick,
Coventry CV4 7AL, U.K.}
\email{r.j.sharp@warwick.ac.uk}
\begin{document}

\begin{abstract} In this paper, we establish statistical results for 
a convex co-compact action of a free group on a CAT($-1$) space
where we restrict to a non-trivial conjugacy class in the group.
In particular, we obtain a central limit theorem where the variance is twice the variance 
that appears when we do not make this restriction.
\end{abstract}

\maketitle

\section{Introduction and results}
Let $\Gamma$ be a free group on $p \geq 2$ generators acting convex co-compactly on a 
$\mathrm{CAT}(-1)$ space $(X,d)$ (i.e the quotient of the intersection of $X$ and the convex hull of the 
limit set of $\Gamma$ is compact). There has been considerable work in trying to understand the 
statistics of such an action.
For example, the following result (a particular case of the \v{S}varc--Milnor lemma) is well-known. 
Fix a free generating set $\mathcal A = \{a_1,\ldots,a_p\}$ and 
let $|\cdot|$ denote word length on $\Gamma$ with respect to $\mathcal A$.
Then, for an arbitrary base point $o\in X$, there exist constants $C_1,C_2>0$ such that
\[
C_1|x| \leq d(o,xo) \leq C_2|x|  \eqno(1.1) 
\]
for all $x \in \Gamma$.
Thus $|x|$ and $d(o,xo)$ are comparable quantities and it is natural to ask if more precise
estimates hold, at least typically or on average.

One such result is the following.
Write $\Gamma_n := \{x \in \Gamma \mbox{ : } |x|=n\}$. Then
the averages
\[
\frac{1}{\#\Gamma_n} \sum_{
x \in \Gamma_n} \frac{d(o,xo)}{n}  \eqno(1.2)
\]
converge to some $\lambda >0$, as $n \to \infty$
 \cite{pollicott1998comparison}, \cite{pollicott2001poincare},
 where the positivity follows immediately from the lower bound in (1.1).
(See Remark 1.3(i) below for a further discussion.) 
Furthermore, subject to a mild non-degeneracy condition, 
namely that the set $\{d(o,xo)- \lambda|x| \mbox{ : } x \in \Gamma\}$ is not bounded,
the distribution of $(d(o,xo)-\lambda n)/\sqrt{n}$ with respect to the normalised counting measure on $\Gamma_n$
converges to a normal distribution $N(0,\sigma^2)$, as $n \to \infty$, for some finite
$\sigma^2>0$.

In this paper, we shall consider the corresponding questions when
we restrict our group elements to a non-trivial conjugacy class.
Let $\mathfrak C$ be a non-trivial conjugacy class in $\Gamma$
and let $k = \min\{|x| \mbox{ : } x \in \mathfrak C\}$.
Let $\mathfrak C_n = \{x \in \mathfrak C \mbox{ : } |x|=n\}$
and note that $\mathfrak C_n$ is non-empty if and only if $n=k+2m$, $m \in
\mathbb Z^+$.

\begin{theorem}\label{main-equi}
We have
\[
\lim_{m \to \infty} \frac{1}{\#\mathfrak C_{k+2m}} 
\sum_{x \in \mathfrak C_{k+2m}} \frac{d(o,xo)}{k+2m}
= \lambda.
\]
\end{theorem}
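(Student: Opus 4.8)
The plan is to deduce the statement from the already-known convergence of the sphere averages (1.2), via a uniform geometric comparison that rewrites $d(o,xo)$, for $x\in\mathfrak C_{k+2m}$, in terms of the length of the element conjugating $x$ onto its cyclically reduced core. Fix a cyclically reduced representative $w=w_1\cdots w_k$ of $\mathfrak C$ (with $k\ge 1$ since $\mathfrak C$ is non-trivial), and recall the standard normal form: each $x\in\mathfrak C$ with $|x|=k+2m$ is uniquely $x=uvu^{-1}$ with $v$ a cyclic permutation of $w$, with $u=u_1\cdots u_m$ reduced of length $m$, and with $u_1\cdots u_m\,v_1\cdots v_k\,u_m^{-1}\cdots u_1^{-1}$ a reduced word --- the last being exactly the non-cancellation conditions $u_m\neq v_1^{-1}$ and $u_m\neq v_k$. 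Hence $\mathfrak C_{k+2m}$ is parametrised by pairs $(v,u)$, with $v$ over the (at most $k$) distinct cyclic permutations of $w$ and, for each such $v$, with $u$ over the set $\Gamma_m^{(v)}$ of reduced length-$m$ words obeying those two constraints on the last letter; in particular $\#\Gamma_m^{(v)}$ and $\#\mathfrak C_{k+2m}$ are each comparable to $\#\Gamma_m$.

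The crux is the uniform bound
\[
d(o,xo)=2\,d(o,uo)+O(1),\qquad x=uvu^{-1}\in\mathfrak C_{k+2m},
\]
with implied constant depending only on $\mathfrak C$, $(X,d)$ and $o$. One rewrites $d(o,xo)=d\bigl(u^{-1}o,\,v(u^{-1}o)\bigr)$, the displacement of $P:=u^{-1}o$ by the hyperbolic isometry $v$, and combines two CAT($-1$) facts. First, the displacement of a point by a hyperbolic isometry $v$ with axis $\alpha_v$ satisfies $d(P,vP)=2\,d(P,\alpha_v)+O(1)$ uniformly in $P$, the error controlled once the translation length of $v$ is bounded above and below --- which holds here, $v$ ranging over finitely many cyclic permutations of $w$ (this is comparison trigonometry, reducing to the explicit formula in $\mathbb H^2$). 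Secondly, $\alpha_v$ lies within bounded distance of $o$ (since $d(o,vo)$ and the translation length are bounded), and the non-cancellation conditions force $d(P,\alpha_v)=d(o,P)+O(1)$: writing $v^{\pm\infty}$ for the endpoints of $\alpha_v$, the conditions $u_m\neq v_1^{-1}$ and $u_m\neq v_k$ are precisely what make the Gromov products $(v^{+\infty}\mid P)_o$ and $(v^{-\infty}\mid P)_o$ bounded, whence $d(P,\alpha_v)=d(o,P)-\max\bigl((v^{+\infty}\mid P)_o,(v^{-\infty}\mid P)_o\bigr)+O(1)=d(o,P)+O(1)$. Since $d(o,P)=d(o,u^{-1}o)=d(o,uo)$, the bound follows. (Equivalently, symbolically: coding $\Gamma$ by the subshift of reduced words and writing $d(o,yo)=S_{|y|}r(\widetilde y)+O(1)$ for a H\"older weight $r$, as in \cite{pollicott1998comparison}, the three blocks of $uvu^{-1}$ contribute $d(o,uo)+O(1)$, $O(1)$, and $d(o,u^{-1}o)+O(1)$, the non-cancellation conditions controlling the block interfaces.)

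With this in hand, the theorem reduces to equidistribution on spheres. Dividing by $k+2m$ and using $2m/(k+2m)\to1$ and $O(1)/(k+2m)\to0$ uniformly, it suffices to show
\[
\frac{1}{\#\mathfrak C_{k+2m}}\sum_{x\in\mathfrak C_{k+2m}}\frac{d\bigl(o,u(x)o\bigr)}{m}\longrightarrow\lambda .
\]
Regrouping by $v$, the left side is a convex combination, with weights $\#\Gamma_m^{(v)}/\#\mathfrak C_{k+2m}$, of the averages of $d(o,uo)/m$ over the sets $\Gamma_m^{(v)}\subseteq\Gamma_m$, each obtained from $\Gamma_m$ by a single restriction on the last letter. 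Now $d(o,uo)/m\to\lambda$ in $L^1$ for the uniform measure on $\Gamma_m$: it is bounded by (1.1), has mean $\to\lambda$ by (1.2), and has variance $O(1/m)$ (for instance via $\var(d(o,uo))=O(m)$ on the coding subshift), so it converges in probability and hence, being bounded, in $L^1$. As $\#\Gamma_m/\#\Gamma_m^{(v)}$ is bounded, the averages over each $\Gamma_m^{(v)}$ also tend to $\lambda$, and therefore so does their convex combination. I expect the genuine obstacle to be the uniform comparison $d(o,xo)=2\,d(o,uo)+O(1)$; after it, one is left with exactly the kind of equidistribution argument already underpinning the convergence of (1.2).
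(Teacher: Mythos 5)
Your proposal is correct and takes a genuinely different route from the paper. The key estimate $d(o,xo)=2d(o,uo)+O(1)$ for $x=uvu^{-1}\in\mathfrak C_{k+2m}$ is, in the symbolic coding, exactly the paper's Lemma \ref{lem: approx} (the bound $|f^{k+2m}(w^{-1}gw)-f^m(w)-f^k(g)-f^m(w^{-1})|\le K$) combined with the symmetry assumption \ref{A2}; your CAT($-1$) derivation via the axis of $v$ and the Gromov products $(v^{\pm\infty}\mid u^{-1}o)_o$ is a geometric alternative to the H\"older-coding argument. Where the two proofs diverge is in the deployment. The paper feeds this estimate into the two-variable generating function $\eta_{\mathfrak C}(z,s)$ and extracts the coefficient of $z^{k+2m}$ in $\partial_s\eta_{\mathfrak C}|_{s=0}$ via the spectral decomposition $L_{2sf}=e^{P(2sf)}R_{2s}+Q_{2s}$ of the transfer operator. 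You instead short-circuit to the known sphere-average result (1.2) by a probabilistic argument: $d(o,uo)/m\to\lambda$ in $L^1$ for the uniform measure on $\Gamma_m$ (uniform boundedness from (1.1), mean from (1.2), concentration from the $O(m)$ Birkhoff-sum variance, cf.\ Proposition \ref{derivatives}), and then, since each conjugator set $\Gamma_m^{(v)}$ occupies a proportion $(p-1)/p$ of $\Gamma_m$, the restricted averages inherit the limit, as does their convex combination. This is lighter and more self-contained for Theorem \ref{main-equi}. The trade-off is that the paper's generating-function machinery is built to simultaneously yield the central limit theorem (Theorems \ref{main-clt} and \ref{clt-holder}), where the doubling of the variance falls out of the $L_{2sf}$ perturbation; an $L^1$/Chebyshev argument such as yours gives equidistribution of the mean but would not by itself produce the distributional refinement.
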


Subject to an additional condition, we also have a central limit theorem.

\begin{theorem}\label{main-clt}
Suppose that 
the set $\{d(o,xo)- \lambda |x| \mbox{ : } x \in \Gamma\}$ is not bounded.
Then the
 distribution of $(d(o,xo)-\lambda (k+2m))/\sqrt{k+2m}$ with respect to normalised counting measure on 
 $\mathfrak C_{k+2m}$
converges to a normal distribution $N(0,2\sigma^2)$, as $n \to \infty$, i.e.
\begin{multline*}
\lim_{m \to \infty} \frac{1}{\#\mathfrak C_{k+2m}} \#\mleft\{x \in \mathfrak C_{k+2m}
\mbox{ : } \frac{d(o,xo)-\lambda (k+2m)}{\sqrt{k+2m}} <y\mright\}
\\
= \frac{1}{2\sqrt{\pi} \sigma}\int_{-\infty}^y e^{-t^2/4\sigma^2} \, dt.
\end{multline*}
\end{theorem}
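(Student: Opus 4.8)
\emph{Proof proposal.} Let $w\in\mathfrak C$ be a cyclically reduced representative, so that $|w|=k$. The plan is to reduce the statement to the unrestricted central limit theorem recalled in the introduction, exploiting the rigid ``there and back'' structure of conjugacy classes in a free group. The first ingredient is the standard normal form: every $x\in\mathfrak C_{k+2m}$ can be written uniquely as $x=g\,\widetilde w\,g^{-1}$, where $\widetilde w$ is a cyclic permutation of $w$, $g$ is a reduced word of length $m$, and the concatenation $g\,\widetilde w\,g^{-1}$ involves no cancellation; the no-cancellation condition amounts to the last letter of $g$ avoiding the two letters $(\widetilde w_1)^{-1}$ and $\widetilde w_k$ (distinct, by cyclic reducedness). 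Reading off the first $m$ and then the next $k$ letters of the reduced word of $x$ shows that $(g,\widetilde w)\mapsto g\widetilde w g^{-1}$ is a bijection onto $\mathfrak C_{k+2m}$. Hence the push-forward of the normalised counting measure on $\mathfrak C_{k+2m}$ under $x\mapsto g$ is the measure on $\Gamma_m$ proportional to $g\mapsto N(g)\mathbf 1_{\Gamma_m}(g)$, where $N(g)\in\{0,1,\dots,k\}$ counts the cyclic permutations $\widetilde w$ of $w$ for which $(g,\widetilde w)$ is admissible; the point is that $N(g)$ depends only on the last letter of $g$ and is uniformly bounded.

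The second ingredient is a uniform geometric estimate: there is a constant $M=M(o,\Gamma)$ with $\bigl| d(o,g\widetilde w g^{-1}o)-2\,d(o,go)\bigr|\le M$ for all admissible $(g,\widetilde w)$. I would prove this by applying $\delta$-thinness of geodesic quadrilaterals in the $\mathrm{CAT}(-1)$ space $X$ to the four points $o$, $go$, $g\widetilde w o$, $g\widetilde w g^{-1}o$: expanding twice via the Gromov-product identity $d(a,b)+d(b,c)-2(a\mid c)_b$ and using that $d(go,g\widetilde w o)=d(o,\widetilde w o)\le C_2k$ and $d(g\widetilde w o,g\widetilde w g^{-1}o)=d(o,go)$, one gets $d(o,xo)=2\,d(o,go)+d(o,\widetilde w o)+O\bigl((g^{-1}o\mid\widetilde w g^{-1}o)_o+(\widetilde w^{-1}o\mid g^{-1}o)_o\bigr)$. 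The no-cancellation conditions guarantee that the geodesic rays from $o$ being compared here leave $o$ in distinct generator directions, and convex co-compactness — which makes the orbit map a quasi-isometric embedding with a uniform rate of divergence between distinct directions — bounds both Gromov products. Writing $n=k+2m$, this yields, uniformly over $x\in\mathfrak C_n$,
\[
\frac{d(o,xo)-\lambda n}{\sqrt n}=\frac{2\bigl(d(o,go)-\lambda m\bigr)+O(1)}{\sqrt{2m+k}}=\sqrt2\cdot\frac{d(o,go)-\lambda m}{\sqrt m}+o(1).
\]

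Finally I would transfer the unrestricted CLT. By that theorem, together with its proof via thermodynamic formalism (Nagaev--Guivarc'h perturbation of a family of transfer operators $L_t$), $(d(o,go)-\lambda m)/\sqrt m$ converges in distribution to $N(0,\sigma^2)$ not only under the uniform measure on $\Gamma_m$ but under any reweighting of it by a bounded function of the last letter of $g$: such a reweighting only changes the pairing of the leading eigenfunction and eigenmeasure of $L_t$ by a positive bounded factor, leaving the leading eigenvalue, hence the variance $\sigma^2=\rho''(0)$, unchanged, and it leaves the non-degeneracy unaffected (so $\sigma^2>0$ under the stated unboundedness hypothesis, exactly as in the unrestricted case). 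By the first paragraph this applies to the $g$-marginal of the normalised counting measure on $\mathfrak C_n$, whence $(d(o,go)-\lambda m)/\sqrt m\Rightarrow N(0,\sigma^2)$ along that measure; combined with the displayed identity and Slutsky's theorem, $(d(o,xo)-\lambda n)/\sqrt n\Rightarrow\sqrt2\cdot N(0,\sigma^2)=N(0,2\sigma^2)$, which is the assertion. (Theorem~\ref{main-equi} follows along the same lines from the corresponding law of large numbers for $d(o,go)/m$.)

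The main obstacle is the uniform geometric estimate of the second paragraph: one must control the discrepancy between $d(o,g\widetilde w g^{-1}o)$ and $2\,d(o,go)$ by a bound independent of both $m$ and $g$, which reduces to a uniform lower bound on how fast geodesic rays in distinct generator directions diverge — this is precisely where convex co-compactness is genuinely used, and it requires some care to state cleanly in the $\mathrm{CAT}(-1)$ setting. A secondary, more routine, task is to verify that the thermodynamic-formalism proof of the unrestricted central limit theorem is insensitive to conditioning the counting measure on the last letter of the word; and one should also record the (elementary) normal-form bijection for $\mathfrak C_{k+2m}$, including the degenerate cases $k=1$ and $w$ a proper power, where injectivity still holds since the reduced word of $g\widetilde w g^{-1}$ has length exactly $k+2m$.
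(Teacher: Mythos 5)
Your proposal reaches the correct conclusion and is sound in outline, but it is organized differently from the paper's argument. The paper proves the abstract Theorem~\ref{clt-holder} (for any $F$ satisfying (A1)--(A3)) via characteristic functions: it writes $\varphi_m(t)$ in terms of $L^m_{\tau f}\chi_g(1)$ with $\tau = 2it/\sqrt{k+2m}$ and reads the variance $2\sigma_f^2$ directly from the second derivative of the pressure. You instead extract the $\sqrt 2$-rescaling explicitly, push forward the counting measure on $\mathfrak C_{k+2m}$ to its $g$-marginal on $\Gamma_m$, and invoke a reweighted version of the unrestricted CLT plus Slutsky's theorem, which makes the appearance of $2\sigma^2=(\sqrt 2)^2\sigma^2$ pleasingly transparent. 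Two caveats. First, the geometric estimate $|d(o,g\widetilde{w}g^{-1}o)-2d(o,go)|\le M$ that you propose to derive from $\delta$-thinness and Gromov products --- and rightly flag as the main obstacle --- is exactly what the paper's hypotheses (A1) and (A2) already deliver: once $d(o,xo)=f^n(x)$ is a Birkhoff sum of a H\"older function on $\Sigma^*$ (Proposition~3 of \cite{pollicott2001poincare}), the bound follows from the elementary H\"older telescoping of Lemma~\ref{lem: approx} together with $F(g)=F(g^{-1})$, and no direct thin-triangles argument is required; proving it geometrically from scratch is harder than necessary. Second, to make the last-letter weight $N(g)$ compatible with a one-sided transfer operator $L_{sf}^m$ one must pass from $g$ to $g^{-1}$ so that the weight becomes a function of the first coordinate, which uses $F(g)=F(g^{-1})$ a second time; this is worth recording explicitly, since the symmetry (A2) thus does double duty in your argument --- producing the factor $2$ in the displacement and rendering the reweighted measure amenable to the thermodynamic-formalism CLT.
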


A noteworthy feature of this result is that the variance is twice the variance that appears in
the unrestricted case. Theorems \ref{main-equi} and \ref{main-clt} will follow from more general 
results proved below.

\begin{remarks}
(i) The existence of the limit 
\[
\lim_{n \to \infty} \frac{1}{\#\Gamma_n} \sum_{x \in \Gamma_n} 
\frac{d(o,xo)}{n}
\]
follows from Proposition 8 of \cite{pollicott1998comparison}. 
The results in \cite{pollicott1998comparison} are proved for co-compact groups of isometries of real hyperbolic space but go over to co-compact groups of isometries of 
CAT($-1$) spaces by the arguments of \cite{pollicott2001poincare}.
Some explanation may be in order here.
The paper \cite{pollicott2001poincare} is 
written in the context of compact manifolds (possibly with boundary) with variable negative curvature.
In our situation, $X$ corresponds to the universal cover of $M$ and $\Gamma$ to the fundamental group,
acting as isometries on $X$. Given a point $p \in M$ and a non-identity element
$x \in \Gamma$ (thought of as $\pi_1(M,p)$), the number $l(x)$ is defined to be the length of the shortest geodesic arc from $p$ to itself in the homotopy class determined by $x$. This can be reinterpreted as the number $d(o,xo)$, where $o$ is a lift of $p$ to $X$, returning us to our original setting. 
Although the results of \cite{pollicott2001poincare} are stated for manifolds of negative curvature, the arguments used there, in particular the key Lemma 1, only require that $X$ be a CAT($-1$) space.
A consequence of this lemma is that $d(o,xo)$ can be written as the Birkhoff sum of a H\"older continuous function on an associated subshift of finite type (Proposition 3 of \cite{pollicott2001poincare}); this shows that 
$d(o,xo)$ satisfies the assumption (A1) in the next section. (Of course, the assumption (A2) below is trivially satisfied.)

The existence of a limit in (1.2) continues to hold if $\Gamma$ is a word hyperbolic group following an
observation of Calegari and Fujiwara \cite{CF}, using a result of Coornaert
\cite{coornaert1993patterson}. 

\smallskip
\noindent
(ii)
The number $\lambda>0$ may also be characterised in the following way.
Let $\Sigma$ be the space of infinite reduced words on 
$\mathcal A \cup \mathcal A^{-1}$ and let $\mu_0$ be the measure of maximal entropy 
for the shift map $\sigma : \Sigma \to \Sigma$ -- these objects are defined in section 2.
Then, for $\mu_0$-a.e. $(x_i)_{i=0}^\infty \in \Sigma$,
\[
\lim_{n \to \infty} \frac{d(o,x_0x_1 \cdots x_{n-1}o)}{n} = \lambda.
\]
This follows from the representation of $d(o,xo)$ as a Birkhoff sum of a H\"older continuous function on $\Sigma \cup \Gamma$ and the ergodic theorem.
(See, for example, Lemma 4.4 and Corollary 4.5 of \cite{pollicott2011matrix}.)

\smallskip
\noindent
(iii) The fact that the variance in Theorem \ref{main-clt} is independent of the choice of conjugacy class is
a consequence of the hypothesis that $\{d(o,xo)-\lambda |x| \hbox{ : } x \in \Gamma\}$ is unbounded,
which is a condition on the behaviour of the displacement function $d(o,xo)$ over the whole group 
$\Gamma$. (The same may be said of the assumption (A3) in the next section.)

\smallskip
\noindent
(iv)
It is interesting to have examples where the above hypothesis that 
$S=\{d(o,xo)-\lambda |x| \hbox{ : } x \in \Gamma\}$ is unbounded holds.
The hypothesis may be reformulated as follows. 
For $x \in \Gamma$, define homogeneous length
functions associated to $d(o,xo)$ and $|x|$:
\[
\ell(x) := \lim_{n \to \infty} \frac{d(o,x^no)}{n}
\quad
\mbox{and}
\quad
\|x\| := \lim_{n \to \infty} \frac{|x^n|}{n}.
\]
Then $\ell(x)$ and $\|x\|$ are positive and depend only on the conjugacy class of $x$, so we may write
$\ell(\mathfrak C)$ and $\|\mathfrak C\|$. Furthermore, $\ell(\mathfrak C)$ is the length of the closed geodesic
on the quotient $\Gamma \backslash X$ in the free homotopy class determined by $\mathfrak C$.
If $S$ were bounded then we would have 
$\ell(\mathfrak C) = \lambda \|\mathfrak C\|$ for all non-trivial conjugacy classes $\mathfrak C$. In particular,
the length spectrum of $\Gamma \backslash X$, i.e.
the set of lengths of closed geodesics, would be contained in the set $\lambda \mathbb Z$.
However, it is known that the length spectrum is not contained in a discrete subgroup of the reals when 
$X$ is the real hyperbolic space $\mathbb H^k$, $k \geq 2$ or when $X$ is a simply connected surface
of pinched variable negative curvature \cite{dalbo}, so the hypothesis holds in these cases.
More generally, though the hypothesis may fail in particular cases, it will typically hold. For example, 
if $X$ is a metric tree with quotient metric graph $\Gamma \backslash X$ then to ensure the 
hypothesis is satisfied, one only requires that $\Gamma \backslash X$ has two closed paths whose
 lengths have irrational 
ratio.

\smallskip
\noindent
(v)
The above results still hold if $d(o,xo)$ is replaced by a H\"older length function
$L(x)$ as defined in \cite{horsham2009lengths}.
\end{remarks}

We end the introduction by outlining the contents of the paper.
In section 2 we discuss the relationship between free groups and subshifts of finite type and 
state more general versions of Theorems \ref{main-equi} and \ref{main-clt}. 
In section 3 we introduce the transfer operators that we use for our analysis and discuss some of their properties. In section 4 we introduce a generating function $\eta_{\mathfrak C}(z,s)$
related to the conjugacy class $\mathfrak C$, where $z$ and $s$ are complex variables.
In the geometric setting considered above, this generating function takes the form
\[
\eta_{\mathfrak C}(z,s) = \sum_{m=0}^\infty z^{k+2m} \sum_{x \in \mathfrak C_{k+2m}}
e^{sd(o,xo)}.
\]
In particular, the variable $z$ is associated to the word length and the variable $s$ to the geometric 
length (or to a more general weighting below).
This generating function is perhaps the main new innovation of the paper, though its analysis
is inspired by work on a somewhat similar function in \cite{ks}.
This allows us to prove our first main result. We
conclude the paper in section 5 by proving a central limit theorem over a non-trivial conjugacy class.
The results in this paper form part of the first author's PhD thesis at the University of Warwick.

\section{Free groups and subshifts}

As above, let \(\Gamma\) be a free group with free generating set 
\(\mathcal{A}=\{a_1,\ldots, a_p\}\), \(p \ge 2\).  
Write $\mathcal A^{-1} = \{a_1^{-1}, \ldots, a_p^{-1}\}\).
A word \(x_0\cdots x_{n-1}\), with letters \(x_k \in \mathcal{A}\cup \mathcal{A}^{-1}\),  
is said to be \textit{reduced} if \(x_{k+1} \neq x_k^{-1}\) for each \(k\in\{0,\ldots, n-2\}\) and \textit{cyclically reduced} if, in addition,
\(x_0 \neq x_{n-1}^{-1}\).  
Every non-identity element $x \in \Gamma$ has a unique representation as
a reduced word $x = x_0 x_1 \cdots x_{n-1}$ and 
we define the \textit{word length} $|x|$ of \(x\), by $|x|=n$.
We associate to the identity element the empty word and set \(|1|=0\).  
Let \(\Gamma_n = \{x\in \Gamma\colon |x|=n\}\).  

Let \(\mathfrak{C}\) be a non-trivial conjugacy class in \(\Gamma\) and let \(k = \inf\{|x|
\colon x\in\mathfrak{C}\} >0\).  The set of elements with shortest word length in the 
conjugacy class is precisely the set of elements with cyclically reduced word 
representations.  In fact, if \(g=g_1 \cdots g_k\in\mathfrak{C}\) is cyclically reduced then 
all cyclically reduced words in \(\mathfrak{C}\) are given by cyclic permutations of the 
letters in \(g_1 \cdots g_k\).  Let \(\mathfrak{C}_n = \{ x \in \mathfrak{C} \colon |x| = n\}\) 
and note that \(\mathfrak{C}_n\) is non-empty if and only if \(n = k+2m\).  If 
\(x \in \mathfrak{C}_{k+2m}\) then its reduced word representation is of the form 
\(w_m^{-1} \cdots w_1^{-1} g_1 \cdots g_k w_1 \cdots w_m\), for some cyclically reduced 
\(g = g_1 \cdots g_k \in\mathfrak{C}_k\) and \(w= w_1 \cdots w_m \in \Gamma_m\) with 
\(w_1 \neq g_1, g_k^{-1}\).   
Hence it is convenient to
introduce the notation \(\Gamma_m(g) = \{w\in \Gamma_m \colon w_1 \neq g_1, 
g_k^{-1}\}\). A simple calculation shows that the number of elements in 
\(\mathfrak{C}_{k+2m}\) is given by 
\(\# \mathfrak{C}_{k+2m} = (2p-2)(2p-1)^{m-1} \#\mathfrak{C}_k\).

We associate to the free group \(\Gamma\) a dynamical system called
a {\it subshift of finite type}.  This subshift of finite type is formed from the space of infinite reduced words 
(with the obvious definition) adjoined to the elements of $\Gamma$ together
with the dynamics given by the action of the shift
map.
It will be convenient to describe this space by means of a transition matrix.
Define a $p \times p$ matrix $A$, with rows and columns 
indexed by \(\mathcal{A}\cup\mathcal{A}^{-1}\), by
\(A(a,b) = 0\) if \(b=a^{-1}\) {and} \(A(a,b) =1\) {otherwise.}  We then define
	\begin{equation*}
		\Sigma = \{ (x_n)_{n=0}^\infty \in (\mathcal{A}\cup\mathcal{A}^{-1})^{\mathbb{Z}^{+}} \colon A(x_n, x_{n+1})=1,\, \forall n \in \Z^+ \}.
	\end{equation*}
The shift map $\sigma : \Sigma \to \Sigma$ is defined by \((\sigma (x_n)_{n=0}^\infty) = (x_{n+1})_{n=0}^\infty\).
We give $\mathcal A \cup \mathcal A^{-1}$ the discrete topology, 
$(\mathcal{A}\cup\mathcal{A}^{-1})^{\mathbb{Z}^{+}}$ the product topology and
$\Sigma$ the subspace topology; then $\sigma$ is continuous.
Since the matrix \(A\) is \textit{aperiodic} (i.e. there exists \(n\ge 1\) such that for each pair of indices \((s,t)\), \(A^n(s,t)>0\)), 
 \(\sigma:\Sigma\to\Sigma\) is \textit{mixing} (i.e. for every pair of non-empty open sets \(U,V\subset \Sigma\) there is an \(n\in\Z^+\) such that \(\sigma^{-k} U \cap V \neq \emptyset\) for \(k\ge n\)).

We augment $\Sigma$ by defining $\Sigma^* = \Sigma \cup \Gamma$, where the elements of $\Gamma$ are identified with finite reduced words in the obvious way.
The shift map naturally extends to a map  \(\sigma: \Sigma^*\to\Sigma^*\), 
where, for the finite reduced word \( x_0 x_1\cdots x_{n-1} \in \Gamma\), we set  \(\sigma(x_0 x_1\cdots x_{n-1}) = x_1\cdots x_{n-1}\); and for the empty word \(\sigma 1 =1\). 
It is sometimes useful to think of an element of $\Gamma$ as an infinite sequence 
ending in an infinite string of $1$s.

We endow \(\Sigma^*\) with the following metric, consistent with the topololgy on 
$\Sigma$. Fix \(0<\theta<1\) then let \(d_\theta(x,x)=0\) and, for  \(x\neq y\), let \(d_\theta(x,y) = \theta^{k}\), where \(k = \min\{n\in\Z^+ \colon x_n \neq y_n\}\).  For a finite word \(x=x_0 x_1\cdots x_{m-1}\in \Gamma_m\) we take \(x_n=1\) (the empty symbol) for each \(n\ge m\).  Then \(\sigma:\Sigma^*\to\Sigma^*\) is continuous and
$\Gamma$ is a dense subset of $\Sigma^*$.

We will write $\mathcal M$ for the set of $\sigma$-invariant Borel probability measures on $\Sigma$.
For $\nu \in \mathcal M$, we write $h(\nu)$ for its entropy. We define the pressure of a continuous function
$f : \Sigma \to \mathbb R$ by
\[
P(f) := \sup_{\nu \in \mathcal M} \mleft(h(\nu) + \int f \, d\nu\mright).
\]
If $f$ is H\"older continuous then the supremum is attained at a unique $\mu_f \in \mathcal M$, called
the equilibrium state of $f$. 
(If $f : \Sigma^* \to \mathbb R$ then we write $P(f) := P(f|_\Sigma)$.)
The equilibrium state of zero $\mu_0$ is also called the measure of maximal 
entropy and $P(0)$ is equal to the topological entropy $h$ of $\sigma : \Sigma \to \Sigma$. 
It is easy to calculate that $h = \log (2p-1)$ (the logarithm of the largest eigenvalue of $A$) and that 
$\mu_0$ is characterised by 
\[
\mu_0([w]) = (2p)^{-1}(2p-1)^{-(n-1)},
\]
where, 
for a reduced word $w=w_0 w_1 \cdots w_{n-1} \in \Gamma_n$, $[w]$ is
the associated cylinder set
$[w] \subset \Sigma^*$ by
$[w] =\{(x_j)_{j=0}^\infty \mbox{ : } x_j=w_j, \, j=0,\ldots,n-1\}$.
(Technically, this defines $\mu_0$ as a measure on $\Sigma^*$ with support equal to $\Sigma$.)

Two H\"{o}lder continuous functions \(f,g:\Sigma^*\to\R\) are \textit{cohomologous} if there exists a continuous function \(u:\Sigma^*\to\R\) such that \(f=g + u\circ \sigma - u\).  
Two H\"older continuous functions have the same equilibrium state if and only if  they differ by the sum of a coboundary and a constant.
A function \(f: \Sigma^*\to\R\) is \textit{locally constant} if there exists \(n\ge1\) such that for all pairs \(x,y\in\Sigma\) with \(x_k = y_k\) for \(0\le k \le n\), \(f(x)=f(y)\).  Locally constant functions are automatically H\"{o}lder continuous for any choice of H\"older exponent.
For a function \(f:\Sigma^*\to\R\) we denote by \(f^n(x)\) the Birkhoff sum 
\[
f^n(x) := f(x) + f(\sigma x) + \cdots + f(\sigma^{n-1} x).
\]

We have the following result \cite{PP}, \cite{ruelle1978thermodynamic}.

\begin{prop} \label{derivatives}
If $f : \Sigma \to \mathbb R$ is H\"older continuous then, for $t \in \mathbb R$, $t \mapsto P(tf)$ 
is real analytic,
\[
\frac{dP(tf)}{dt}\Big|_{t=0} = \int f \, d\mu_0
\]
and 
\[
\frac{d^2P(tf)}{dt^2}\Big|_{t=0} =   \sigma_f^2 :=
\lim_{n \to \infty} \frac{1}{n} \int \mleft(f^n(x) - n\int f \, d\mu_0\mright)^2 \, d\mu_0.
\]
Furthermore, $\sigma_f^2=0$ if and only if $f$ is cohomologous to a constant.
\end{prop}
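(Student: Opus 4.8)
The plan is to deduce everything from the Ruelle--Perron--Frobenius (RPF) theorem for the transfer operators attached to $\sigma : \Sigma \to \Sigma$, together with analytic perturbation theory --- the same machinery that is used in Sections~3--5. For H\"older continuous $g : \Sigma \to \R$, let $L_g$ act on a suitable Banach space $\mathcal F$ of H\"older functions on $\Sigma$ (for instance the $d_\theta$-Lipschitz functions) by $(L_g \phi)(x) = \sum_{\sigma y = x} e^{g(y)} \phi(y)$. Since $A$ is aperiodic, the RPF theorem provides a simple isolated eigenvalue $\lambda_g = e^{P(g)} > 0$ with strictly positive eigenfunction, the rest of the spectrum having strictly smaller radius, and the equilibrium state $\mu_g$ is built from the corresponding eigendata; for $g=0$ one has $L_0 1 = (2p-1)1$, so the eigenfunction is constant and $\mu_0$ (with the cylinder values recorded above) is exactly the probability eigenmeasure, $L_0^*\mu_0 = (2p-1)\mu_0$. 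As $z \mapsto L_{zf}$ is an entire family of bounded operators on $\mathcal F$ and $\lambda_0$ is simple and isolated, Kato's perturbation theory makes $z \mapsto \lambda_{zf}$ and the associated rank-one spectral projection $z \mapsto \Pi_{zf}$ analytic on a complex neighbourhood of $0$, with a spectral gap that is uniform there. Hence $P(tf) = \log \lambda_{tf}$ is real-analytic, and the same argument around an arbitrary $t_0$ gives analyticity on all of $\R$.

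For the first two derivatives I would pass through the moment generating functions of the Birkhoff sums $f^n$. An easy induction gives $L_{tf}^n 1 = L_0^n(e^{tf^n})$, so, using $L_0^*\mu_0 = (2p-1)\mu_0$, we obtain the exact identity
\[
\int_\Sigma e^{tf^n} \, d\mu_0 = (2p-1)^{-n} \int_\Sigma L_{tf}^n 1 \, d\mu_0 .
\]
On the other hand $\lambda_{tf}^{-n} L_{tf}^n 1 = \Pi_{tf}(1) + O(\rho^n)$ in $\mathcal F$, uniformly for $t$ near $0$, with some $\rho < 1$; setting $\beta(t) := \int \Pi_{tf}(1)\,d\mu_0$ (analytic near $0$, $\beta(0) = 1$) this gives
\[
\frac1n \log \int_\Sigma e^{tf^n} \, d\mu_0 = \bigl(P(tf) - h\bigr) + \frac1n \log\bigl(\beta(t) + O(\rho^n)\bigr) .
\]
The last term is holomorphic and tends to $0$ uniformly on a fixed complex neighbourhood of $0$, so Cauchy's estimates let its $t$-derivatives at $0$ pass to $0$ as well. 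Differentiating at $t = 0$ and letting $n \to \infty$: the first $t$-derivative of the left-hand side is $\frac1n \int f^n \, d\mu_0 = \int f \, d\mu_0$ by $\sigma$-invariance of $\mu_0$, which forces $\frac{dP(tf)}{dt}\big|_{t=0} = \int f\,d\mu_0$; the second $t$-derivative of the left-hand side is $\frac1n \int (f^n - n\int f\,d\mu_0)^2\,d\mu_0$, so this quantity converges and its limit $\sigma_f^2$ equals $\frac{d^2 P(tf)}{dt^2}\big|_{t=0}$. I expect the main obstacle here to be exactly this interchange of $\lim_n$ with $\frac{d^2}{dt^2}$: one must upgrade the pointwise RPF asymptotics to bounds that are holomorphic and uniform on a complex neighbourhood of $0$, which is precisely what the analytic perturbation of the simple leading eigenvalue supplies.

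For the non-degeneracy criterion, first note that replacing $f$ by $f - \int f\,d\mu_0$ changes neither $\sigma_f^2$ nor the property of being cohomologous to a constant, so we may assume $\int f\,d\mu_0 = 0$ and must show $\sigma_f^2 = 0$ iff $f = u \circ \sigma - u$ for some $u \in \mathcal F$. If $f = u\circ\sigma - u$ then $f^n = u\circ\sigma^n - u$ is uniformly bounded, so $\sigma_f^2 = 0$. For the converse, introduce the normalised operator $\mathcal L := (2p-1)^{-1} L_0$, which satisfies $\mathcal L 1 = 1$, $\mathcal L^* \mu_0 = \mu_0$, $\mathcal L(\phi\circ\sigma) = \phi$, and has the spectral gap $\mathcal L^j \phi = \int \phi\,d\mu_0 + O(\rho^j)$ in $\mathcal F$. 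Since $\int f\,d\mu_0 = 0$, the series $w := \sum_{j \geq 1} \mathcal L^j f$ converges in $\mathcal F$, with $\mathcal L w = w - \mathcal L f$ and $\int w\,d\mu_0 = 0$. Put $g := f + w - w\circ\sigma$; then $\int g\,d\mu_0 = 0$ and $\mathcal L g = \mathcal L f + \mathcal L w - w = 0$, hence $\mathcal L^j g = 0$ for all $j \geq 1$. Since $f - g = w\circ\sigma - w$ is a coboundary we have $\sigma_f^2 = \sigma_g^2$, and the correlation identity $\int (\phi\circ\sigma^j)\psi\,d\mu_0 = \int \phi\,(\mathcal L^j \psi)\,d\mu_0$ makes every nonzero-lag correlation of $g$ vanish, whence $\sigma_g^2 = \int g^2\,d\mu_0$. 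Therefore $\sigma_f^2 = 0$ forces $g = 0$ $\mu_0$-almost everywhere, hence $g \equiv 0$ on $\supp \mu_0 = \Sigma$ by continuity, and $f = w\circ\sigma - w$ is a coboundary. This converse again leans on the full strength of the spectral gap (to make sense of $w=\sum_j\mathcal L^j f$ and of the identity $\mathcal L g = 0$); all of the above is classical in the thermodynamic formalism, cf.\ \cite{PP}, \cite{ruelle1978thermodynamic}.
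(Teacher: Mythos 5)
The paper does not give a proof of Proposition~\ref{derivatives}; it is recorded as a classical result of thermodynamic formalism and attributed to the references \cite{PP} and \cite{ruelle1978thermodynamic}. Your argument is correct and follows exactly the route those references take: analytic perturbation of the simple leading eigenvalue of $L_{tf}$ gives analyticity of $P(tf)$; the identity $\int e^{tf^n}\,d\mu_0 = e^{-nh}\int L_{tf}^n 1\,d\mu_0$ together with the uniform (holomorphic) spectral-gap bound lets one pass first and second $t$-derivatives through the limit in $n$; and the coboundary reduction $g = f + w - w\circ\sigma$ with $w = \sum_{j\ge1}\mathcal L^j f$ kills all nonzero-lag correlations and yields $\sigma_f^2 = \sigma_g^2 = \int g^2\,d\mu_0$, giving the non-degeneracy criterion. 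No gaps.
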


For convenience, in the work that follows we shall interchangeably refer to elements \(x\in \Gamma\) and the associated element of the sequence space \(x\in\Sigma^*\).  We now state the technical result from which Theorem \ref{main-equi} follows.
We consider functions \(F :\Gamma\to\R\)
which satisfy the following two assumptions.
	\begin{enumerate}[label=(A\arabic*), noitemsep]
		\item \label{A1} There exists a H\"{o}lder continuous function \(f:\Sigma^*\to\R\) so that \(F(x) = f^n(x)\) for each \(x\in \Gamma_n\) with \(n\ge 0\), and
		\item \label{A2} \(F(x) = F(x^{-1})\).
	\end{enumerate}

We will prove the following.

\begin{theorem} \label{equi-holder}  Suppose that \(F :\Gamma\to\R\) satisfies assumptions \ref{A1} and \ref{A2}. There exists $\overline{F} \in \mathbb R$ such that
	\begin{equation*}
		\lim_{m\to\infty} \frac{1}{\#\mathfrak{C}_{k+2m}} \sum_{x\in\mathfrak{C}_{k+2m}} \frac{F(x)}{k+2m} = \overline{F}.
	\end{equation*}
Furthermore, $\overline{F} = \int f \, d\mu_0$.
\end{theorem}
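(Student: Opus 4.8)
The plan is to reduce the average over $\mathfrak{C}_{k+2m}$ to averages over all of $\Gamma_m$, for which the explicit form of the measure of maximal entropy $\mu_0$ pins down the limit directly. First I would exploit the parametrisation recorded in Section~2: each $x \in \mathfrak{C}_{k+2m}$ is uniquely $x = w^{-1}gw$ with $g = g_1\cdots g_k$ a cyclically reduced representative of $\mathfrak{C}$ and $w = w_1\cdots w_m \in \Gamma_m(g)$, and $\#\mathfrak{C}_{k+2m} = \#\mathfrak{C}_k\,(2p-2)(2p-1)^{m-1}$. The key structural fact is that assumption \ref{A1} makes $F$ \emph{almost additive} on reduced products: if $uv$ is reduced then, splitting the Birkhoff sum $F(uv) = f^{|u|+|v|}(uv)$ at position $|u|$, the tail equals $F(v)$ exactly while the head differs from $F(u)$ by $\sum_{j=0}^{|u|-1}\bigl(f(\sigma^j(uv)) - f(\sigma^j u)\bigr)$, which is bounded by a constant $C$ depending only on $f$ and $\theta$ because $\sigma^j(uv)$ and $\sigma^j u$ agree in their first $|u|-j$ coordinates. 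Applying this twice to $w^{-1}\cdot g \cdot w$ and using \ref{A2} to replace $F(w^{-1})$ by $F(w)$ gives
\[
F(w^{-1}gw) = 2F(w) + F(g) + O(1),
\]
with an error uniform in $w$, $g$ and $m$. Summing over $\mathfrak{C}_{k+2m}$, the $\#\mathfrak{C}_{k+2m}$ many error terms and the $F(g)$ terms (each $O(1)$, and only $\#\mathfrak{C}_k$ distinct ones) together contribute $O(\#\mathfrak{C}_{k+2m})$ and so disappear after dividing by $k+2m$; everything then comes down to the asymptotics of $\tfrac{1}{\#\Gamma_m(g)}\sum_{w\in\Gamma_m(g)} F(w)$.

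For this I would use that $\mu_0([w]) = 1/\#\Gamma_m$ for every $w \in \Gamma_m$ (immediate from the formula for $\mu_0$ in Section~2), so that
\[
\frac{1}{\#\Gamma_m}\sum_{w\in\Gamma_m} F(w) = \sum_{w\in\Gamma_m}\mu_0([w])\,f^m(w),
\]
which I would compare with $\int f^m\,d\mu_0 = m\int f\,d\mu_0$. The difference is $\sum_{w}\int_{[w]}\bigl(f^m(w) - f^m(y)\bigr)\,d\mu_0(y)$, and for $y$ in the rank-$m$ cylinder $[w]$ the two Birkhoff sums differ by at most $C$ (the same Hölder estimate as above, now because $\sigma^j w$ and $\sigma^j y$ agree in their first $m-j$ coordinates); hence $\tfrac{1}{\#\Gamma_m}\sum_{w\in\Gamma_m} F(w) = m\int f\,d\mu_0 + O(1)$. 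Passing from $\Gamma_m$ to $\Gamma_m(g)$ only removes the words beginning with $g_1$ or $g_k^{-1}$; writing $S_m^{(a)}$ for the average of $F$ over $\{w\in\Gamma_m : w_0 = a\}$ and using the exact identity $F(aw') = f(aw') + F(w')$ one obtains a contraction-type recursion $\bigl|S_m^{(a)} - S_m^{(b)}\bigr| \le (2p-1)^{-1}\bigl|S_{m-1}^{(a^{-1})} - S_{m-1}^{(b^{-1})}\bigr| + O(1)$, forcing the $S_m^{(a)}$ to stay within $O(1)$ of one another and hence of $m\int f\,d\mu_0$. (Alternatively, and more in keeping with the rest of the paper, this step can be run through the Ruelle--Perron--Frobenius transfer operator, with exponential decay of correlations for $\mu_0$ replacing the recursion and Proposition~\ref{derivatives} identifying the leading coefficient as $\int f\,d\mu_0 = \left.\tfrac{d}{dt}P(tf)\right|_{t=0}$.)

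Combining the two steps,
\[
\frac{1}{\#\mathfrak{C}_{k+2m}}\sum_{x\in\mathfrak{C}_{k+2m}} F(x) = \frac{1}{\#\mathfrak{C}_{k+2m}}\sum_{g}\sum_{w\in\Gamma_m(g)}\bigl(2F(w) + F(g) + O(1)\bigr) = 2m\int f\,d\mu_0 + O(1),
\]
so dividing by $k+2m$ and letting $m\to\infty$ gives the limit $\int f\,d\mu_0$, which both establishes convergence and identifies $\overline{F} = \int f\,d\mu_0$. I expect the main obstacle to be bookkeeping rather than ideas: because there are exponentially many summands but only the factor $k+2m$ available to absorb errors, one must be scrupulous that every $O(1)$ is genuinely uniform in $m$ (and, in the first-letter correction, in $g$) — the almost-additivity bound and the cylinder-comparison bound are precisely what guarantee this. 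It is worth noting where the factor $2$ comes from: the two occurrences of $w$ in $w^{-1}gw$, which is exactly the mechanism behind the doubled variance in Theorem~\ref{main-clt}.
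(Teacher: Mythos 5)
Your proposal is correct, and it reaches the theorem by a genuinely more elementary route than the paper's. Both arguments start from the same place: the parametrisation of $\mathfrak{C}_{k+2m}$ by pairs $(g,w)$ with $g \in \mathfrak{C}_k$ cyclically reduced and $w \in \Gamma_m(g)$, and the almost-additivity estimate $F(w^{-1}gw) = 2F(w) + F(g) + O(1)$, which is precisely Lemma~\ref{lem: approx} combined with \ref{A2}. Where they diverge is in extracting the asymptotics of $\sum_{w \in \Gamma_m(g)} F(w)$. The paper packages everything into the two-variable generating function $\eta_{\mathfrak{C}}(z,s)$, rewrites the inner sums via the transfer operator as $L_{2sf}^m\chi_g(1)$, and reads off the leading coefficient $P'(0) = \int f\,d\mu_0$ from the spectral decomposition $L_{2sf} = e^{P(2sf)}R_{2s} + Q_{2s}$ together with Proposition~\ref{derivatives}. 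You instead use the explicit identity $\mu_0([w]) = 1/\#\Gamma_m$ to compare $\sum_{w\in\Gamma_m}F(w)$ directly with $\#\Gamma_m \cdot m\int f\,d\mu_0$, absorbing the discrepancy by the cylinder estimate $|f^m(w) - f^m(y)| \le |f|_\theta\theta/(1-\theta)$ for $y\in[w]$, and then treat the first-letter constraint defining $\Gamma_m(g)$ by the contraction recursion $S_m^{(a)} = (2p-1)^{-1}\sum_{c\ne a^{-1}}S_{m-1}^{(c)} + O(1)$, which forces all the conditional averages $S_m^{(a)}$ to lie within $O(1)$ of one another and hence of $m\int f\,d\mu_0$. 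Both routes deliver the quantitative statement $\frac{1}{\#\mathfrak{C}_{k+2m}}\sum_{x\in\mathfrak{C}_{k+2m}}F(x) = 2m\int f\,d\mu_0 + O(1)$, after which the division by $k+2m$ is the same. What the paper's heavier machinery buys is reuse: the spectral decomposition and the generating function are exactly what drive the proof of Theorem~\ref{clt-holder} in Section~\ref{sec: CLT}, whereas your first-letter recursion would have to be replaced there by the characteristic-function and eigenvalue-perturbation argument anyway; you anticipate this correctly in your parenthetical remark. What your approach buys is a self-contained proof of Theorem~\ref{equi-holder} that avoids spectral theory entirely, carries visible uniform error terms at each step, and isolates transparently where the factor of $2$, and hence the doubled variance, comes from.
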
	

We remark that, without the restriction to a conjugacy class, the analogous result 
\[
\lim_{n \to \infty} \frac{1}{\#\Gamma_n}
\sum_{x \in \Gamma_n} \frac{F(x)}{n} = \overline{F}
\]
holds subject only to \ref{A1}. This follows from the analysis in \cite{pollicott1998comparison} or from a large deviations argument following the ideas of Kifer \cite{kifer} as employed in \cite{pollicott1996large}.

We also establish a central limit theorem for the group elements in \(\Gamma\) restricted to a non-trivial conjugacy class.  In addition to assumptions \ref{A1} and \ref{A2}, we require a third assumption.  
	\begin{enumerate}[label=(A\arabic*)] \setcounter{enumi}{2}
		\item \label{A3} $F(\cdot) - \overline{F}|\cdot|$ is unbounded as a function from 
		$\Gamma$ to $\mathbb R$.
	\end{enumerate}

\begin{lem}
Let $F$ and $f$ be as in \ref{A1}. Then
$F(\cdot) -\overline{F}|\cdot|$ is bounded if and only if $f|_\Sigma$ is cohomologous to a constant.
\end{lem}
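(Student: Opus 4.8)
The plan is to reduce both implications to the variance criterion in Proposition~\ref{derivatives}, using a simple approximation argument that compares Birkhoff sums over finite words with Birkhoff sums over infinite sequences. First I would set $g := f - \overline{F}$, so that $\int g\,d\mu_0 = 0$ (recall $\overline{F} = \int f\,d\mu_0$) and, by assumption~\ref{A1}, $F(x) - \overline{F}|x| = f^n(x) - n\overline{F} = g^n(x)$ for every $x \in \Gamma_n$, where $g^n(x)$ is the Birkhoff sum of $g$ along the orbit $x,\sigma x,\ldots,\sigma^{n-1}x$ in $\Sigma^*$. Since $g|_\Sigma$ and $f|_\Sigma$ differ by a constant, $g|_\Sigma$ is a coboundary if and only if $f|_\Sigma$ is cohomologous to a constant, which by Proposition~\ref{derivatives} holds if and only if $\sigma_f^2 = \lim_{n\to\infty}\frac1n\int (g^n)^2\,d\mu_0 = 0$. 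So it suffices to prove that the set $\{g^n(x) : x \in \Gamma,\ n = |x|\}$ is bounded if and only if $\sigma_f^2 = 0$.

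The technical heart is the following observation: there is a constant $K$, depending only on $\theta$ and the H\"older seminorm $|g|_\theta$, such that for all $n \ge 1$, every $\xi \in \Sigma$ and every $x \in \Gamma_n$ whose first $n$ letters agree with those of $\xi$, one has $|g^n(\xi) - g^n(x)| \le K$. Indeed, for $0 \le j \le n-1$ the sequences $\sigma^j\xi$ and $\sigma^j x$ agree on coordinates $0,\ldots,n-j-1$ and first differ at coordinate $n-j$ (where $x$, padded by the empty symbol, carries $1$), so $d_\theta(\sigma^j\xi,\sigma^j x) \le \theta^{n-j}$; H\"older continuity of $g$ and a geometric series give $K = |g|_\theta\,\theta/(1-\theta)$. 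Here one uses that every $x \in \Gamma_n$ extends to some $\xi \in \Sigma$, since each letter of $\mathcal A \cup \mathcal A^{-1}$ has $2p-1 \ge 3$ admissible successors, and conversely that the length-$n$ truncation of any $\xi \in \Sigma$ lies in $\Gamma_n$.

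With this in hand the two directions are short. If $f|_\Sigma$ is cohomologous to a constant, write $g|_\Sigma = u\circ\sigma - u$ with $u:\Sigma\to\R$ continuous, hence bounded by compactness of $\Sigma$; then $g^n(\xi) = u(\sigma^n\xi) - u(\xi)$ is bounded by $2\|u\|_\infty$ for all $\xi \in \Sigma$, and extending each $x \in \Gamma_n$ to such a $\xi$ gives $|F(x) - \overline{F}|x|| = |g^n(x)| \le 2\|u\|_\infty + K$, so $F(\cdot) - \overline{F}|\cdot|$ is bounded. Conversely, if $|F(x) - \overline{F}|x|| = |g^n(x)| \le M$ for all $x \in \Gamma$, then truncating each $\xi \in \Sigma$ to its length-$n$ prefix in $\Gamma_n$ and applying the estimate yields $|g^n(\xi)| \le M + K$ for all $\xi \in \Sigma$ and $n \ge 1$; hence $\int (g^n)^2\,d\mu_0 \le (M+K)^2$, so $\sigma_f^2 = \lim_n \frac1n\int (g^n)^2\,d\mu_0 = 0$, and Proposition~\ref{derivatives} finishes the argument.

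The hard part is really just the bookkeeping in the approximation lemma: one has to use that in the augmented space $\Sigma^*$ a finite word and any infinite extension of it are $\theta^n$-close, so that Birkhoff sums over $\Gamma_n$ and over $\Sigma$ never drift more than a uniformly bounded amount apart. Once that is pinned down, both implications follow from Proposition~\ref{derivatives} with no further work.
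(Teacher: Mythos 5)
Your proof is correct and follows essentially the same route as the paper: both directions reduce to the variance criterion of Proposition~\ref{derivatives}, using Hölder continuity of $f$ to transfer boundedness of Birkhoff sums between $\Gamma_n \subset \Sigma^*$ and $\Sigma$. You simply make explicit the uniform constant $K = |g|_\theta\,\theta/(1-\theta)$ and the coboundary telescoping that the paper leaves implicit under the phrase ``by Hölder continuity.''
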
 	

\begin{proof}
For simplicity, we will write $f|_\Sigma=f$.
If $F(\cdot) -\overline{F}|\cdot|$ is bounded then $\mleft\{f^n(x) - n\int f \, d\mu_0 \mbox{ : } 
x \in \Gamma_n, \ n \geq 1\mright\}$ is a bounded
set. Since $f$ is H\"older continuous, this implies that 
\[
\mleft\{f^n(x) - n\int f \, d\mu_0 \mbox{ : } x \in \Sigma,
\ n \geq 1\mright\}
\]  
is also bounded. In particular, $\left(f^n - n\int f \, d\mu_0\right)^2/n$ converges 
uniformly to zero and it is
easy to deduce that $\sigma_f^2 =0$. Therefore, by Proposition \ref{derivatives}, $f$ is cohomologous
to a constant.

On the other hand, if $f$ is cohomologous to a constant then, again by H\"older continuity,
$\{F(x)-\overline{F}|x| \mbox{ : } x \in \Gamma\} = \left\{f^n(x) - n\int f \, d\mu_0 \mbox{ : } 
x \in \Gamma_n, \ n \geq 1\right\}$ is bounded.
\end{proof}
	
It is a well-known result that if $f : \Sigma \to \mathbb R$ is not cohomologous to a constant then
the process $f \circ \sigma^n$, $n\geq 1$, satisfies 
a central limit theorem with respect to $\mu_0$ with variance $\sigma_f^2$, i.e., 
that $\left(f^n - n\int f \, d\mu_0\right)/\sqrt{n}$ converges in distribution to a normal random variable with 
mean zero and variance $\sigma_f^2>0$ or, explicitly, that
for $a \in \mathbb R$,
\[
\lim_{n \to \infty} \mu_0\mleft\{x \in \Sigma : \mleft(f^n(x) -n\int f \, d\mu_0\mright)/\sqrt{n} \leq a\mright\}
= \frac{1}{\sqrt{2\pi} \sigma_f} \int_{-\infty}^a e^{-u^2/2\sigma_f^2} \, du
\]
\cite{ruelle1978thermodynamic}, \cite{coelho1990central}. Furthermore, analogues of this hold for the periodic
points of $\sigma : \Sigma \to \Sigma$ \cite{coelho1990central} and, by adapting the proof, for pre-images
of a given point. This gives a central limit theorem for $F$ over $\Gamma_n$ (without the assumption \ref{A2}). Particular cases of this have appeared in articles
by Rivin
\cite{rivin2010growth} for homomorphisms, and Horsham and Sharp 
\cite{horsham2009lengths} (see also \cite{horsham2008central}) for quasimorphisms.
Calegari and Fujiwara \cite{CF} prove a central limit theorem for quasimorphisms on 
Gromov hyperbolic groups, but have more restrictions on the regularity
of the quasimorphism.
Restricting to a non-trivial conjugacy class, we have the following theorem.
	
\begin{theorem} \label{clt-holder} Suppose that $F : \Gamma \to \mathbb R$ satisfies
assumptions  \ref{A1}, \ref{A2} and \ref{A3}.  Then 
the sequence 
	\begin{equation*}
		 \frac{1}{\# \mathfrak{C}_{k+2m}} \#\mleft\{x\in \mathfrak{C}_{k+2m} 
		 \colon (F(x)-(k+2m)\overline{F})/\sqrt{{k+2m}} \le a \mright\}
	\end{equation*}
converges to the distribution function of a normal random variable with mean \(0\) and positive variance \(2\sigma_f^2\).
\end{theorem}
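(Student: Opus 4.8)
The plan is to prove convergence of the characteristic functions
\[
\phi_m(\xi):=\frac{1}{\#\mathfrak C_{k+2m}}\sum_{x\in\mathfrak C_{k+2m}}\exp\!\mleft(i\xi\,\frac{F(x)-(k+2m)\overline F}{\sqrt{k+2m}}\mright)\longrightarrow e^{-\sigma_f^2\xi^2}\qquad(m\to\infty),
\]
and then to apply the L\'evy continuity theorem, noting that $e^{-\sigma_f^2\xi^2}$ is the characteristic function of $N(0,2\sigma_f^2)$. Throughout I use the combinatorial description from section~2: for $m\ge1$ every $x\in\mathfrak C_{k+2m}$ is the reduced word $w_m^{-1}\cdots w_1^{-1}g_1\cdots g_k w_1\cdots w_m$, with $g=g_1\cdots g_k$ running over the finitely many cyclically reduced representatives in $\mathfrak C_k$ and $w=w_1\cdots w_m$ running over $\Gamma_m(g)$.

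The first step is a reduction of $F(x)$ using \ref{A1} and \ref{A2}. Split the Birkhoff sum $F(x)=f^{k+2m}(x)$ into the three blocks corresponding to the subwords $w_m^{-1}\cdots w_1^{-1}$, $g_1\cdots g_k$ and $w_1\cdots w_m$. H\"older continuity of $f$ shows that the first block agrees with $f^m(w^{-1})$ and the second with $f^k(g)=F(g)$, each up to an error bounded uniformly in $g,w,m$ (the tail of a fixed geometric series), while the third block equals $f^m(w)$ exactly. Since $f^m(w^{-1})=F(w^{-1})=F(w)=f^m(w)$ by \ref{A2} and \ref{A1}, this gives $F(x)=2f^m(w)+F(g)+O(1)$ uniformly. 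Using $\overline F=\int f\,d\mu_0$ (Theorem~\ref{equi-holder}) and $\sqrt{2m}/\sqrt{k+2m}\to1$, we obtain, uniformly in $x$,
\[
\frac{F(x)-(k+2m)\overline F}{\sqrt{k+2m}}=\sqrt2\;\frac{f^m(w)-m\overline F}{\sqrt m}+o(1),
\]
so the problem reduces to a central limit theorem for $f^m$ over $\Gamma_m(g)$ with normalised counting measure; the surviving factor $\sqrt2$ is precisely the source of the doubled variance.

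To carry this out I would use the transfer operators $L_{sf}$ of section~3 together with the Nagaev--Guivarc'h method. Taking $s=i\xi/\sqrt{k+2m}$ (purely imaginary, so all the sums are bounded) and absorbing the $O(1)$ corrections above and the letter restriction $w_1\ne g_1,g_k^{-1}$ into fixed functions and functionals, one writes $\frac{1}{\#\mathfrak C_{k+2m}}\sum_{x}e^{sF(x)}$ as a finite combination of terms $L_{2sf}^m$ evaluated against fixed data, divided by $\#\mathfrak C_{k+2m}=(2p-2)(2p-1)^{m-1}\#\mathfrak C_k$; this is the same kind of computation as in the (known) unrestricted central limit theorem over $\Gamma_n$, the only difference being the doubled coefficient $2s$. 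Perturbation theory gives $L_{2sf}$ a simple leading eigenvalue $e^{P(2sf)}$, analytic in $s$, with a spectral gap, so the ratio equals $e^{m(P(2sf)-h)}$ times a factor that is analytic in $s$ and — by evaluating at $s=0$, where the ratio is $1$ — tends to $1$ as $m\to\infty$. By Proposition~\ref{derivatives},
\[
P(2sf)-h=2s\int f\,d\mu_0+2\sigma_f^2 s^2+O(s^3),
\]
so after substituting $s=i\xi/\sqrt{k+2m}$ the $\sqrt m$-sized first-order phase cancels against the centring factor $e^{-i\xi(k+2m)\overline F/\sqrt{k+2m}}$ and $\phi_m(\xi)\to e^{-\sigma_f^2\xi^2}$. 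Finally, by the Lemma above together with Proposition~\ref{derivatives}, assumption \ref{A3} is exactly the statement $\sigma_f^2>0$, so $2\sigma_f^2$ is a genuine positive variance.

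I expect the principal difficulty to lie in the transfer-operator part of the third step: choosing a function space on which $L_{sf}$ acts that also registers finite reduced words, so that the ``backward'' block $w_m^{-1}\cdots w_1^{-1}$, the fixed core $g$, and the restriction $w\in\Gamma_m(g)$ can all be encoded without disturbing the leading eigenvalue; controlling the perturbed eigenprojection and the remainder of the spectrum uniformly for small purely imaginary $s$, so that the substitution $s=i\xi/\sqrt{k+2m}$ is legitimate; and verifying that the uniformly bounded corrections from the first step only affect the sub-leading analytic factor, which in any case cancels in the ratio. The block decomposition, the application of Proposition~\ref{derivatives}, and the final cancellation of the divergent phases are then routine.
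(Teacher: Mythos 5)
Your proposal follows essentially the same route as the paper's proof: you invoke L\'evy's continuity theorem, split the Birkhoff sum over $x=w^{-1}gw$ into three blocks and use H\"older continuity together with \ref{A2} to get $F(x)=2f^m(w)+F(g)+O(1)$ (this is precisely the paper's Lemma~\ref{lem: approx}), rewrite the characteristic function via $L_{2sf}^m$ applied to the indicator $\chi_g$ of the admissible first letters, use the spectral gap and Proposition~\ref{derivatives} to extract $e^{m(P(2sf)-h)}\to e^{-\sigma_f^2\xi^2}$, and observe that \ref{A3} forces $\sigma_f^2>0$. The only place your sketch is looser than the paper is the sentence ``the ratio equals $e^{m(P(2sf)-h)}$ times a factor that is analytic in $s$ and \dots\ tends to $1$'': the $Q_\tau^m$ contribution is \emph{additive}, not multiplicative, so it is not absorbed into a single analytic factor; one must separately bound $\|Q_\tau^m\|/|\beta(\tau)|^m$ using the spectral gap, exactly as the paper does after its decomposition $L_{sf}=\beta(s)R_s+Q_s$. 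With that clarification your argument matches the paper's proof step for step.
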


We note the limiting distribution function is independent of the choice of non-trivial conjugacy class.  Further, it is interesting that the variance in Theorem \ref{clt-holder} is twice the variance when we do not restrict elements \(x\in \Gamma\) to a non-trivial conjugacy class.

\begin{proof}[Proof of Theorems \ref{main-equi} and \ref{main-clt}]
As in the introduction, let the free group $\Gamma$ act convex co-compactly on a 
CAT$(-1)$ space $(X,d)$. Then it was shown in \cite{pollicott2001poincare}
that $F(x) := d(o,xo)$ satisfies \ref{A1}. (In fact, the result in 
\cite{pollicott2001poincare} is stated when $X$ is a simply connected manifold
with bounded negative curvatures but the proof only requires the CAT$(-1)$
property.) Assumption \ref{A2} is clearly satisfied. Therefore, Theorem \ref{main-equi} follows from Theorem \ref{equi-holder}.
Furthermore, the additional assumption on $d(o,xo)$ in Theorem \ref{main-clt}
matches \ref{A3} and so Theorem \ref{main-clt} also follows.
\end{proof}

\section{Transfer operators}

  In this section we recall results from the theory of transfer operators that will be used to deduce Theorem \ref{equi-holder} and 
Theorem \ref{clt-holder}.
Let $\mathcal{F}_\theta(\Sigma,\mathbb C)$ denote the space of $d_\theta$-Lipschitz functions
$f : \Sigma \to \mathbb C$. This is a Banach space with respect to the norm
$\|\cdot\|_\theta = \|\cdot\|_\infty + |\cdot|_\theta$, where
\[
|f|_\theta:= \sup_{x \neq y} \frac{|f(x)-f(y)|}{d_\theta(x,y)}.
\]
Any H\"older continuous function becomes Lipschitz by changing the choice of $\theta$
(i.e. if $f$ has H\"older exponent $\alpha$ with respect to $d_\theta$ then 
$f \in \mathcal{F}_{\theta^\alpha}(\Sigma,\mathbb C)$), so 
there is no loss of generality in restricting to these spaces.
Given \(g \in \mathcal{F}_\theta(\Sigma,\C)\), the \textit{transfer operator} \(L_g: \mathcal{F}_\theta(\Sigma,\C)
\to \mathcal{F}_\theta(\Sigma,\C)\) is defined pointwise by
	\begin{equation*}
			L_g \omega(x) = \sum_{\sigma y = x} e^{g(y)} \omega(y).
	\end{equation*}

We have the following standard result \cite{PP}, \cite{ruelle1978thermodynamic}.

\begin{prop}  [Ruelle--Perron--Frobenius Theorem] \label{rpf}
Suppose that \(g \in \mathcal{F}_\theta(\Sigma,\C)\) is real-valued. Then 
\(L_g: \mathcal{F}_\theta(\Sigma,\C)
\to \mathcal{F}_\theta(\Sigma,\C)\) has a simple eigenvalue equal to $e^{P(g)}$, associated strictly positive eigenfunction $\psi$ and eigenmeasure $\nu$ (i.e. $L_g\psi = e^{P(g)}\psi$ and $L_g^*\nu =e^{P(g)}\nu$),
normalised so that $\nu$ is a probability measure and $\int \psi \, d\nu =1$.
Furthermore, the rest of the spectrum of $L_g$ is contained in a disk of radius strictly smaller than $e^{P(g)}$.
\end{prop}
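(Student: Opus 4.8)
The plan is to carry out the classical proof of the Ruelle--Perron--Frobenius theorem in three stages: first produce the leading eigenmeasure and eigenfunction by compactness, then identify the leading eigenvalue with $e^{P(g)}$ via the variational principle, and finally extract the spectral gap from a Lasota--Yorke-type inequality together with the mixing of $\sigma$. The technical engine throughout is that, since $g$ is H\"older, the Birkhoff sums $g^n$ have oscillation over $n$-cylinders bounded uniformly in $n$; this bounded-distortion estimate underlies all the quantitative steps.

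First I would build the eigenmeasure: the dual $L_g^*$ acts continuously on the weak-$*$ compact convex set of Borel probability measures on $\Sigma$ by $m \mapsto L_g^* m / \int L_g \mathbf{1}\,dm$ (the denominator is positive and bounded away from $0$ since $L_g \mathbf{1}>0$ on the compact space $\Sigma$), so the Schauder--Tychonoff fixed point theorem gives a probability measure $\nu$ with $L_g^*\nu = \lambda\nu$, where $\lambda := \int L_g\mathbf{1}\,d\nu>0$. Bounded distortion shows the iterates $\lambda^{-n}L_g^n\mathbf{1}$ are bounded above and below away from $0$ and have uniformly bounded $|\cdot|_\theta$-seminorm (their $\nu$-integrals all equal $1$, which fixes the scale), so by Arzel\`a--Ascoli the Ces\`aro averages $\frac1n\sum_{k=0}^{n-1}\lambda^{-k}L_g^k\mathbf{1}$ have a uniform limit point $\psi\in\mathcal{F}_\theta(\Sigma,\C)$, strictly positive with $L_g\psi=\lambda\psi$; rescale so $\int\psi\,d\nu=1$. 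To identify $\lambda$, pass to the normalised potential $\widehat g := g + \log\psi - \log(\psi\circ\sigma) - \log\lambda$, for which $L_{\widehat g}\mathbf{1}=\mathbf{1}$; then $\mu := \psi\nu$ satisfies $L_{\widehat g}^*\mu=\mu$ and, because $\int\varphi\circ\sigma\,d\mu = \int L_{\widehat g}(\varphi\circ\sigma)\,d\mu = \int\varphi\,d\mu$ for continuous $\varphi$, the measure $\mu$ is $\sigma$-invariant. A standard entropy inequality (Gibbs' inequality for the partition into $1$-cylinders, using $L_{\widehat g}\mathbf{1}=\mathbf{1}$) gives $h(m)+\int\widehat g\,dm\le 0$ for all $m\in\mathcal M$ with equality at $\mu$; translating back through the cohomology (and using $\int(\log\psi-\log\psi\circ\sigma)\,dm=0$), $h(m)+\int g\,dm\le\log\lambda$ for all $m\in\mathcal M$ with equality at $\mu$, so $\log\lambda = P(g)$ and $\mu$ is the equilibrium state.

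Write $L = L_{\widehat g}$, which is conjugate to $\lambda^{-1}L_g$ via multiplication by the (bounded, bounded-away-from-$0$) function $\psi$, so it suffices to prove the statement for $L$ with leading eigenvalue $1$. If $Lh=h$ then, taking real and imaginary parts (again fixed by $L$), we may assume $h$ real; at a point $x_0$ where $h$ is maximal, the identity $h(x_0)=\sum_{\sigma y=x_0}e^{\widehat g(y)}h(y)$ with $\sum_{\sigma y=x_0}e^{\widehat g(y)}=1$ forces $h$ to attain its maximum on $\sigma^{-1}x_0$, hence on $\bigcup_n\sigma^{-n}x_0$, which is dense by aperiodicity of $A$, so $h$ is constant. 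Thus the eigenspace of $1$ is spanned by $\mathbf{1}$, with no Jordan block since $Lh=h+\mathbf{1}$ would force $L^nh=h+n\mathbf{1}$, contradicting $\|L^n\|_{C(\Sigma)}=1$. For the gap one proves the Ionescu-Tulcea--Marinescu (Lasota--Yorke) inequality $|L^n\omega|_\theta\le\theta^n|\omega|_\theta + C\|\omega\|_\infty$, which together with the relative compactness of the unit ball of $\mathcal{F}_\theta(\Sigma,\C)$ in $C(\Sigma)$ and Hennion's theorem shows $L$ is quasi-compact with essential spectral radius at most $\theta<1$; hence only finitely many eigenvalues, of finite multiplicity, lie on or outside the unit circle. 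Finally, if $Lh=\zeta h$ with $|\zeta|=1$ and $h\not\equiv 0$, then $|h|=|Lh|\le L|h|$, and integrating against $\mu$ forces $L|h|=|h|$, so $|h|$ is a positive constant; the equality case of the triangle inequality then gives $h=\zeta\,(h\circ\sigma)$, hence $h=\zeta^n(h\circ\sigma^n)$ for all $n$, so $h$ is constant on each tail-equivalence class, and these are dense by aperiodicity of $A$; therefore $h$ is constant and $\zeta=1$. So $1$ is the unique and simple spectral point of $L$ on the unit circle and the rest of the spectrum lies in a disk of radius strictly less than $1$, as required.

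The soft parts — the two fixed-point and compactness steps, and the maximum principle — are routine; I expect the main obstacle to be the quasi-compactness of $L$ together with the exclusion of peripheral eigenvalues other than $1$. Both hinge on a careful bounded-distortion estimate for H\"older potentials (used in the Lasota--Yorke inequality and in the a priori $|\cdot|_\theta$-bound on $\lambda^{-n}L_g^n\mathbf{1}$), and removing the remaining peripheral eigenvalues is precisely where the mixing of $\sigma$ — equivalently, aperiodicity of $A$ — is used in an essential way.
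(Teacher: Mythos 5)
The paper offers no proof of Proposition~\ref{rpf}; it is stated as a standard result with references to \cite{PP} and \cite{ruelle1978thermodynamic}, so there is no in-paper argument to compare against. Your outline is a correct rendering of the classical proof found in those sources: Schauder--Tychonoff for the eigenmeasure $\nu$, bounded distortion together with Arzel\`a--Ascoli applied to Ces\`aro averages for the positive eigenfunction $\psi$, passage to the normalised potential and the variational principle to identify the leading eigenvalue as $e^{P(g)}$, a Lasota--Yorke inequality combined with Hennion's theorem for quasi-compactness, and the maximum-principle and equality-in-the-triangle-inequality arguments (both resting on the mixing of $\sigma$, i.e.\ aperiodicity of $A$) to establish simplicity and rule out any other peripheral spectrum.
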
	
	
The equilibrium state $\mu_g$ is given by $d\mu_g = \psi d\nu$.	
We say that $g$ is {\it normalised} if $L_g1=1$ (which in particular implies $P(g)=0$).	
If we replace \(g\) by \(g' = g - P(g) + u - u\circ \sigma\) where \(u = \log \psi\) then \(g'\) is normalised
and \(g\) and \(g'\) have the same equilibrium state.

Suppose that \(f,g \in \mathcal{F}_\theta(\Sigma,\mathbb C)\) are real-valued functions.  We consider small perturbations of the operator \(L_{g}\) of the form \(L_{g+sf}\) for values of \(s\in \mathbb{C}\) in a neighbourhood of the origin.  Since \(e^{P(g)}\) is a simple isolated eigenvalue of \(L_g\), for small perturbations of \(s\) close to the origin this eigenvalue persists so that the operator \(L_{g+sf}\) has a simple eigenvalue 
\(\beta(s)\) and corresponding eigenfuction $\psi_s$ that vary analytically with \(s\) 
and satisfy $\beta(0) = e^{P(g)}$ and $\psi_0=\psi$ \cite{Kato}.
Furthermore, by the upper semi-continuity of the spectral radius, there exists $\varepsilon>0$ such that,
for \(s\) close to the origin,
 the remainder of the spectrum of $L_{g+sf}$ lies in a disk of radius 
 $e^{P(g) -\varepsilon}$.
 We extend the definition of pressure by setting \(e^{P(g+sf)}= \beta(s)\).

We find it useful to consider \(\sigma: \Sigma^*\to \Sigma^*\) as a subshift of finite type and will use the previous notation and concepts introduced for \(\Sigma\) in this setting.   We modify the definition of the {transfer operator} 
\(L_{sf}: \mathcal F_\theta(\Sigma^*, \C)\to \mathcal F_\theta(\Sigma^*, \C)\) as follows:
	\begin{equation*}
		L_{sf} \omega(x) = \sum_{\substack{\sigma y = x \\ y\neq 1}} e^{sf(y)} \omega(y).
	\end{equation*}

Here \(1\) denotes the identity element in \(\Gamma\), considered as an infinite
word $(1,1,\ldots)$. We note the transfer operator we use differs from the usual 
definition by excluding the preimage \(y=1\) from the summation over the set 
\(\{y\in\Sigma^* \colon \sigma y =x\}\); however, the definition of this transfer 
operator agrees with our previous definition for each \(x\neq 1\).  
Following Lemma 2 of \cite{pollicott1998comparison}, 
$L_{sf}: \mathcal F_\theta(\Sigma^*, \C)\to \mathcal F_\theta(\Sigma^*, \C)$
has the same isolated eigenvalues as 
$L_{sf} :\mathcal F_\theta(\Sigma \cup \{1\}, \C)\to \mathcal F_\theta(\Sigma \cup \{1\}, \C)$.
Since the modified definition of $L_{sf}$ 
excludes the eigenvalue \(e^{sf(1)}\) associated to the eigenfunction 
\(\chi_{\{1\}}\) (the indicator function of the set $\{1\}$),
$L_{sf}: \mathcal F_\theta(\Sigma^*, \C)\to \mathcal F_\theta(\Sigma^*, \C)$
therefore has the same isolated eigenvalues as
$L_{sf}: \mathcal F_\theta(\Sigma, \C)\to \mathcal F_\theta(\Sigma, \C)$.
Furthermore, again by Lemma 2 of \cite{pollicott1998comparison}, 
$L_{sf} : \mathcal F_\theta(\Sigma^*, \C)\to \mathcal F_\theta(\Sigma^*, \C)$
is quasi-compact with essential spectral radius at most $\theta e^{P(\mathrm{Re}(s) f)}$,
and so it suffices to consider the spectral theory of $L_{sf}$ on $\mathcal F_\theta(\Sigma,\mathbb C)$.

\section{Proof of Theorem \ref{equi-holder}}


%
%
In this section, we will prove Theorem \ref{equi-holder}.
We introduce a generating function  \(\eta_\mathfrak{C}(z,s)\) on two complex variables given by
	\begin{equation*}
		\eta_{\mathfrak{C}}(z,s) = \sum_{m=0}^\infty z^{k+2m} \sum_{x\in \mathfrak{C}_{k+2m}} e^{sF(x)} = \sum_{m=0}^\infty z^{k+2m} \sum_{g\in\mathfrak{C}_k} \sum_{w\in \Gamma_m(g)} e^{sf^{k+2m}(w^{-1}gw)}
	\end{equation*}
(wherever the series converges).
We prove the theorem by studying the asymptotic behaviour, as \(m\to\infty\), of the coefficient of \(z^{k+2m}\) in the power series
	\begin{equation*}
		\mleft. \frac{\partial}{\partial s} \eta_\mathfrak{C}(z,s) \mright|_{s=0} = \sum_{m=0}^\infty z^{k+2m} \sum_{x\in \mathfrak{C}_{k+2m}} F(x).
	\end{equation*}

We will find the following bound useful in the proof of Theorem \ref{equi-holder}.
	\begin{lem} \label{lem: approx} Suppose that \(f\in\mathcal{F}_{\theta}(\Sigma^*,\mathbb C)\), \(g\in \mathfrak{C}_k\) and \(w\in \Gamma_m(g)\) then there exists a constant \(K>0\), independent of \(m\), such that
	\begin{equation*}
		|f^{k+2m}(w^{-1}gw) - f^m(w) - f^k(g) -f^m(w^{-1})| \le K.
	\end{equation*}
	\end{lem}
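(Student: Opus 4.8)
The plan is to exploit the explicit reduced-word structure of $w^{-1}gw$ together with the fact that a H\"older continuous $f$ has exponentially small oscillation on cylinders, and then sum a geometric series. Recall that for $x \in \mathfrak{C}_{k+2m}$ the reduced word is $w_m^{-1}\cdots w_1^{-1}\, g_1\cdots g_k\, w_1\cdots w_m$ with $w \in \Gamma_m(g)$, so no cancellation occurs at the two junctions; in particular the concatenated word $w^{-1}gw$ really does have length $k+2m$ and the Birkhoff sum $f^{k+2m}(w^{-1}gw)$ makes sense as written. Write the Birkhoff sum as a sum of $k+2m$ terms $f(\sigma^j(w^{-1}gw))$ for $j = 0,\dots,k+2m-1$, and split the range of $j$ into three blocks: the first $m$ shifts (which read off the letters of $w^{-1}$, then continue into $g_1\cdots g_k w_1\cdots w_m$ and the terminating string of $1$s), the middle $k$ shifts (reading $g_1\cdots g_k$ then $w_1\cdots w_m$ and $1$s), and the last $m$ shifts (reading $w_1\cdots w_m$ then $1$s). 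These three blocks are to be compared termwise with $f^m(w^{-1})$, $f^k(g)$, and $f^m(w)$ respectively.

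The key point is that for each $j$ in the first block, $\sigma^j(w^{-1}gw)$ and $\sigma^j(w^{-1})$ (the latter viewed in $\Sigma^*$ as ending in $1$s) agree in their first $m-j$ coordinates, since both begin with the tail $w_m^{-1}\cdots w_{j+1}^{-1}$ of length $m-j$; hence by H\"older continuity
\[
\bigl| f(\sigma^j(w^{-1}gw)) - f(\sigma^j(w^{-1})) \bigr| \le |f|_\theta\, \theta^{\,m-j}.
\]
Summing over $j = 0,\dots,m-1$ gives a contribution bounded by $|f|_\theta \sum_{i\ge 1}\theta^i = |f|_\theta\,\theta/(1-\theta)$, independent of $m$. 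An identical argument handles the last block (now $\sigma^j(w^{-1}gw)$ and $\sigma^{j-m-k}(w)$ agree far out because both start with $w_{j-m-k+1}\cdots w_m$ followed by $1$s), and the middle block is even easier: there the two sequences being compared agree on at least their first $k-j'$ coordinates for the relevant local index $j'$, so one gets a bounded contribution (in fact at most $|f|_\theta\,\theta/(1-\theta)$, uniformly — indeed this block has only $k$ terms, and $k$ is fixed). Adding the three estimates yields the claim with, say, $K = 3|f|_\theta\,\theta/(1-\theta)$; one may absorb any constant discrepancy from how the terminal $1$s are read to be safe.

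The main obstacle — really the only thing requiring care — is bookkeeping the alignment of the shifted sequences at the boundaries between the blocks, i.e. making sure one correctly identifies, for each $j$, how many leading coordinates $\sigma^j(w^{-1}gw)$ shares with the corresponding shift of $w^{-1}$, $g$, or $w$, given the convention that finite words are padded with $1$s. Once the index arithmetic is pinned down the H\"older estimate and the geometric summation are routine. A minor secondary point is that $f$ is defined on $\Sigma^*$ and we are applying it to points of $\Gamma \subset \Sigma^*$, but that is exactly the setting of the metric $d_\theta$ introduced in Section 2, so no additional argument is needed; the hypothesis $f \in \mathcal{F}_\theta(\Sigma^*,\mathbb C)$ (possibly after adjusting $\theta$) supplies the required Lipschitz bound $|f|_\theta$.
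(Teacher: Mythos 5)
Your argument is correct and is essentially the paper's proof: both decompose the Birkhoff sum $f^{k+2m}(w^{-1}gw)$ into blocks of length $m$, $k$, $m$ and use H\"older continuity plus a geometric series to bound the discrepancy at each block. The only difference is that the paper first notes $\sigma^{m+k}(w^{-1}gw) = w$ exactly, so the last block contributes no error at all and one gets $K = 2|f|_\theta\theta/(1-\theta)$; your third estimate is superfluous (your ``agree far out'' is in fact equality), but since the lemma only asks for some uniform $K$ this is harmless.
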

\begin{proof}
 We have \(f^{k+2m}(w^{-1}gw) = f^{m}(w^{-1}gw) + f^k(gw) + f^m(w)\). Thus
	\begin{multline*}
		|f^{k+2m}(w^{-1}gw) - f^m(w) - f^k(g) -f^m(w^{-1})| \\ \le |f^m(w^{-1}gw) - f^m(w^{-1})| + |f^k(gw) - f^k(g)| \le \frac{2|f|_\theta \theta}{1-\theta}
	\end{multline*}
and we are done.
\end{proof}

By Lemma \ref{lem: approx},
	\begin{equation*}
		\exp ( sf^{k+2m}(w^{-1}gw)) = \exp \mleft( s( f^m(w) + f^k(g) +f^m(w^{-1}) ) \mright) + s\kappa_w + \xi_w(s)
	\end{equation*}
where \(\kappa_w = f^{k+2m}(w^{-1}gw) - f^m(w) - f^k(g) -f^m(w^{-1})\) is uniformly bounded for $w \in  \Gamma$ (by Lemma \ref{lem: approx}) and \(\xi_w(s) = s^2 \zeta_w(s)\), with \(\zeta_w(s)\) an entire function.  By this approximation and assumption \ref{A2}, we have
	\begin{equation*}
		\eta_{\mathfrak{C}}(z,s) =  \sum_{m=0}^\infty z^{k+2m} \sum_{g\in \mathfrak{C}_{k}} \sum_{w\in \Gamma_m(g)} e^{s(f^k(g) + 2f^m(w))} +\delta(z,s)
\end{equation*}
where
	\begin{equation*}
		\delta(z,s) = \sum_{m=0}^\infty z^{k+2m} \sum_{g\in \mathfrak{C}_{k}} \sum_{w\in \Gamma_m(g)} \left(s\kappa_w + \xi_w(s)\right).
	\end{equation*} 
%


Let \(\chi_g : \Sigma^* \to \R\) be the locally constant function given by
	\begin{equation*}
		\chi_g((w_n)_{n=0}^\infty) = \begin{dcases} 0 & \text{if}\ w_0 = g_1, g_k^{-1},\, \text{and} \\
			1 & \text{otherwise}.
			\end{dcases}
	\end{equation*}
We introduced the function \(\chi_g\) in order to write \(\eta_\mathfrak{C}(z,s)\) in terms of the transfer operator.  We have
\begin{align*}
	\eta_{\mathfrak{C}}(z,s) &= \sum_{g\in \mathfrak{C}_{k}} e^{sf^k(g)} \sum_{m=0}^\infty z^{k+2m} \sum_{w\in \Gamma_m} e^{2sf^m(w)} \chi_g(w) + \delta(z,s), \\
		&=  \sum_{g\in \mathfrak{C}_{k}} e^{sf^k(g)} \sum_{m=0}^\infty z^{k+2m} (L_{2sf}^m \chi_g)(1) + \delta(z,s). 
\end{align*}

Thus the power series \(\sum_{m=0}^\infty z^{k+2m} \sum_{x\in \mathfrak{C}_{k+2m}} F(x)\) can be written in terms of the transfer operator since
\begin{multline*}
	\mleft. \frac{\partial}{\partial s}\, \eta_{\mathfrak{C}}(z,s)\mright|_{s=0} =
	\sum_{g\in \mathfrak{C}_{k}} \sum_{m=0}^\infty \mleft. \frac{\partial}{\partial s}\, z^{k+2m} (L_{2sf}^m \chi_g)(1)  \mright|_{s=0} \\
	+ \sum_{g\in \mathfrak{C}_{k}} f^k(g) \sum_{m=0}^\infty z^{k+2m} (L_{0}^m \chi_g)(1) 
	 + \mleft. \frac{\partial}{\partial s}\, \delta(z,s)\mright|_{s=0}.
\end{multline*}

We analyse the growth of the coefficients of the power series in the following sequence of lemmas.

\begin{lem} The coefficient of \(z^{k+2m}\) in the power series \(\sum_{m=0}^\infty z^{k+2m} (L_{0}^m \chi_g)(1)\) grow with order \(O(e^{mh})\).
\end{lem}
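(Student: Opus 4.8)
The plan is to identify the coefficient $(L_0^m\chi_g)(1)$ with an explicit counting quantity and then invoke the elementary estimates from Section 2. First I would write out the definition of the modified transfer operator $L_0$ (this is $L_{2sf}$ evaluated at $s=0$) on $\mathcal F_\theta(\Sigma^*,\C)$ and iterate it $m$ times starting from the empty word, obtaining
\[
(L_0^m\chi_g)(1) = \sum_{w} \chi_g(w),
\]
where the sum ranges over those $w\in\Sigma^*$ with $\sigma^m w = 1$ but $\sigma^j w \neq 1$ for $0\le j\le m-1$. The first condition says that $w$, viewed as a finite reduced word, has length at most $m$; the remaining conditions — which encode the exclusion of the preimage $y=1$ at each application of $L_0$ — force the length to be exactly $m$. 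Hence the sum ranges over $w\in\Gamma_m$, and since $\chi_g$ is the indicator of the reduced words whose first letter is neither $g_1$ nor $g_k^{-1}$, we get $(L_0^m\chi_g)(1) = \#\Gamma_m(g)$.

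The next step is to bound $\#\Gamma_m(g)$, and here I would simply quote the count already recorded in Section 2: using that $g$ is cyclically reduced (so $g_1\neq g_k^{-1}$), the first letter of $w$ has $2p-2$ admissible values and each later letter has $2p-1$, whence $\#\Gamma_m(g) = (2p-2)(2p-1)^{m-1}$ for $m\ge 1$. Since the topological entropy is $h=\log(2p-1)$, this equals $\tfrac{2p-2}{2p-1}\,e^{mh}$, which is $O(e^{mh})$.

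An alternative, more structural route — which I would mention but not pursue in detail — avoids the explicit enumeration: by the Ruelle--Perron--Frobenius theorem (Proposition \ref{rpf}) together with the quasi-compactness of $L_0$ on $\mathcal F_\theta(\Sigma^*,\C)$ established in Section 3, the spectral radius of $L_0$ equals $e^{P(0)} = e^h$, with $e^h$ a simple eigenvalue, so $\|L_0^m\| = O(e^{mh})$ and in particular $|(L_0^m\chi_g)(1)| \le \|L_0^m\chi_g\|_\infty = O(e^{mh})$.

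I do not anticipate a serious obstacle: the statement is essentially the observation that iterating the modified transfer operator from the empty word counts reduced words of a fixed length. The only points requiring a little care are the bookkeeping in the first step — checking that excluding $y=1$ at each stage is exactly what pins the length down to $m$ rather than to something at most $m$ — and noting that $\#\Gamma_m(g)$ is independent of the chosen cyclically reduced representative $g$.
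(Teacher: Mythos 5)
Your proof is correct, and it supplies exactly the argument the paper omits (the lemma is stated without proof). The key identification $(L_0^m\chi_g)(1)=\#\Gamma_m(g)$ is right: since $\sigma 1 = 1$ is the only way a preimage can be the empty word, excluding $y=1$ bites only at the first application, after which lengths increase by one at each step, pinning the surviving $w$ to $\Gamma_m$; combining this with the count $\#\Gamma_m(g)=(2p-2)(2p-1)^{m-1}$ from Section~2 and $h=\log(2p-1)$ gives the $O(e^{mh})$ bound (in fact an exact value). Your alternative spectral-radius argument via Proposition~\ref{rpf} and the quasi-compactness of $L_0$ on $\mathcal F_\theta(\Sigma^*,\mathbb C)$ is also valid and closer in spirit to the estimates used for the $Q_s$ and $R_s$ terms, but the direct enumeration is the more elementary route and is what the surrounding text implicitly relies on.
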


The coefficient in the next lemma grows with the same order. 
\begin{lem} \label{lem: 5.3.4}
	The coefficient of \(z^{k+2m}\) in the power series \(\mleft. \frac{\partial}{\partial s} \delta(z,s) \mright|_{s=0}\) grow with order \(O(e^{mh})\).
\end{lem}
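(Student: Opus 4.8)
The goal is to bound the coefficient of $z^{k+2m}$ in $\left.\frac{\partial}{\partial s}\delta(z,s)\right|_{s=0}$, where $\delta(z,s) = \sum_{m=0}^\infty z^{k+2m}\sum_{g\in\mathfrak C_k}\sum_{w\in\Gamma_m(g)}(s\kappa_w + \xi_w(s))$ and $\xi_w(s)=s^2\zeta_w(s)$ with $\zeta_w$ entire. The plan is to differentiate term-by-term: since $\xi_w(s)=s^2\zeta_w(s)$ contributes nothing to the first $s$-derivative at $s=0$, we get
\[
\left.\frac{\partial}{\partial s}\delta(z,s)\right|_{s=0} = \sum_{m=0}^\infty z^{k+2m}\sum_{g\in\mathfrak C_k}\sum_{w\in\Gamma_m(g)}\kappa_w,
\]
so the coefficient of $z^{k+2m}$ is exactly $\sum_{g\in\mathfrak C_k}\sum_{w\in\Gamma_m(g)}\kappa_w$. (I should first justify that the term-by-term differentiation is valid on a suitable disc in $z$, by noting the double series converges absolutely there — this follows from the uniform bound on $\kappa_w$ and the exponential control on $\xi_w(s)$ for $s$ in a complex neighbourhood of $0$, together with $\#\Gamma_m(g) = O(e^{mh})$.)

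The key estimate is then immediate from Lemma \ref{lem: approx}: each $\kappa_w = f^{k+2m}(w^{-1}gw) - f^m(w) - f^k(g) - f^m(w^{-1})$ satisfies $|\kappa_w|\le K$ with $K = \frac{2|f|_\theta\theta}{1-\theta}$ independent of $m$ and of $g,w$. Hence
\[
\left|\sum_{g\in\mathfrak C_k}\sum_{w\in\Gamma_m(g)}\kappa_w\right| \le K\,\#\mathfrak C_k\,\#\Gamma_m(g) \le K\,\#\mathfrak C_k\,\#\Gamma_m,
\]
and since $\#\Gamma_m = 2p(2p-1)^{m-1} = O(e^{mh})$ with $h=\log(2p-1)$, the coefficient of $z^{k+2m}$ is $O(e^{mh})$ as claimed.

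There is essentially no obstacle here — the content is entirely in Lemma \ref{lem: approx}, which has already been proved, and in the elementary counting $\#\Gamma_m(g)\le\#\Gamma_m = O(e^{mh})$. The only point requiring a line of care is the interchange of differentiation and summation; alternatively one can bypass it by observing directly from the definition of $\delta$ that, writing $\exp(sf^{k+2m}(w^{-1}gw)) - \exp(s(f^k(g)+2f^m(w))) = s\kappa_w + \xi_w(s)$ and differentiating this identity in $s$ at $s=0$, one obtains $\kappa_w$ on the right, so no manipulation of infinite sums is needed beyond recognising that the coefficient of $z^{k+2m}$ in $\left.\partial_s\delta\right|_{s=0}$ is a finite sum of the $\kappa_w$ over the finite sets $\mathfrak C_k$ and $\Gamma_m(g)$.
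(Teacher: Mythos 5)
Your proof is correct and follows essentially the same route as the paper: kill $\xi_w$ using $\xi_w(s)=s^2\zeta_w(s)$, bound $|\kappa_w|\le K$ uniformly via Lemma \ref{lem: approx}, and count. The only cosmetic difference is that the paper uses the exact count $\sum_{g\in\mathfrak C_k}\#\Gamma_m(g)=\#\mathfrak C_{k+2m}=(2p-2)(2p-1)^{m-1}\#\mathfrak C_k$ whereas you bound $\#\Gamma_m(g)\le\#\Gamma_m$; both give $O(e^{mh})$.
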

\begin{proof}
Since, for each $w \in \Gamma$, $\xi_w'(0)=0$,
\begin{equation*} \mleft. \frac{\partial}{\partial s} \delta(z,s) \mright|_{s=0} = \sum_{m=0}^\infty z^{k+2m} \sum_{g\in \mathfrak{C}_k} \sum_{w\in \Gamma_m(g)} \kappa_w.
\end{equation*}
For each \(w\in \Gamma\) we have \(|\kappa_w| \le K\).  Thus the coefficient of \(z^{k+2m}\) is bounded in modulus by
\begin{equation*}
	\sum_{g\in \mathfrak{C}_k} \sum_{w\in \Gamma_m(g)} K = K \# \mathfrak{C}_{k+2m} = K(2p-2)(2p-1)^{m-1} \#\mathfrak{C}_k = O(e^{mh}),
\end{equation*}
from which the lemma follows.
\end{proof}

We decompose the transfer operator \(L_{sf}\) into the projection \(R_s\) associated to the eigenspace associated to the eigenvalue \(e^{P(sf)}\) and \(Q_s = L_{sf} - e^{P(sf)}R_s\).  For \(s\in\C\) in a neighbourhood of \(s=0\), the operators \(R_s\) and \(Q_s\) are analytic. 
We use this operator decomposition to obtain the estimates in the next two lemmas.
\begin{lem}
	The coefficient of \(z^{k+2m}\) in the power series 
	\begin{equation*} 
	\mleft. \frac{\partial}{\partial s} \sum_{g\in\mathfrak{C}_k} 
	\sum_{m=0}^\infty z^{k+2m} Q_{2s}^m \chi_g(1) \mright|_{s=0}
	\end{equation*} 
grow with order 
\(O(e^{m(h - \varepsilon)})\), for some \(\varepsilon>0\).
\end{lem}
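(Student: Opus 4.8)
The plan is to exploit the spectral gap that separates the leading eigenvalue $e^{P(2sf)}$ of $L_{2sf}$ from the rest of its spectrum, uniformly for $s$ near $0$. Recall from Section~3 that there is $\varepsilon>0$ and a neighbourhood of $s=0$ on which the spectral radius of $Q_{2s}=L_{2sf}-e^{P(2sf)}R_{2s}$ is at most $e^{P(0)-\varepsilon}=e^{h-\varepsilon}$, and on which $s\mapsto Q_{2s}$ is analytic as a map into the bounded operators on $\mathcal F_\theta(\Sigma,\mathbb C)$. First I would fix such a neighbourhood and a slightly smaller closed disc $\{|s|\le r\}$ inside it; by compactness and analyticity there is a uniform bound $\|Q_{2s}\|\le M$ and, more importantly, a uniform estimate on the spectral radius, so that $\|Q_{2s}^m\|\le C\, e^{m(h-\varepsilon')}$ for all $m$ and all $|s|\le r$, for some $0<\varepsilon'<\varepsilon$ and some $C>0$ independent of $m$ and $s$ (this follows from the spectral radius formula together with a Cauchy-estimate / uniform-boundedness argument on the disc). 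Since $\chi_g$ ranges over a finite set (one function for each $g\in\mathfrak C_k$), the norms $\|\chi_g\|_\theta$ are uniformly bounded, and evaluation at $1$ is a bounded linear functional on $\mathcal F_\theta(\Sigma^*,\mathbb C)$, so $|Q_{2s}^m\chi_g(1)|\le C'\, e^{m(h-\varepsilon')}$ uniformly for $|s|\le r$.

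Next I would extract the coefficient of $z^{k+2m}$ in the derivative at $s=0$. Writing $\Phi(z,s)=\sum_{g\in\mathfrak C_k}\sum_{m=0}^\infty z^{k+2m} Q_{2s}^m\chi_g(1)$, the coefficient in question is $\sum_{g\in\mathfrak C_k}\partial_s\big(Q_{2s}^m\chi_g(1)\big)|_{s=0}$. Because $s\mapsto Q_{2s}^m\chi_g(1)$ is analytic on $|s|\le r$, Cauchy's integral formula gives
\begin{equation*}
\left.\frac{\partial}{\partial s}\,Q_{2s}^m\chi_g(1)\right|_{s=0}
= \frac{1}{2\pi i}\oint_{|s|=r} \frac{Q_{2s}^m\chi_g(1)}{s^2}\,ds,
\end{equation*}
so that
\begin{equation*}
\left|\left.\frac{\partial}{\partial s}\,Q_{2s}^m\chi_g(1)\right|_{s=0}\right|
\le \frac{1}{r}\sup_{|s|=r}\big|Q_{2s}^m\chi_g(1)\big|
\le \frac{C'}{r}\,e^{m(h-\varepsilon')}.
\end{equation*}
Summing over the finitely many $g\in\mathfrak C_k$ multiplies this by $\#\mathfrak C_k$, a constant independent of $m$, and one concludes that the coefficient of $z^{k+2m}$ is $O(e^{m(h-\varepsilon')})=O(e^{m(h-\varepsilon)})$ after relabelling $\varepsilon$.

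The main obstacle is establishing the \emph{uniformity in $s$} of the exponential decay rate of $\|Q_{2s}^m\|$: the Ruelle--Perron--Frobenius theorem and the perturbation results quoted give the spectral gap at each fixed $s$, but one needs a single $\varepsilon$ and a single constant $C$ that work simultaneously for all $s$ on the disc. This is handled by noting that $s\mapsto L_{2sf}$ is analytic, the leading eigenvalue $\beta(2s)$ and spectral projection $R_{2s}$ vary analytically (hence continuously), and the remainder of the spectrum depends upper-semicontinuously on $s$; shrinking $r$ if necessary keeps the essential and peripheral non-leading spectrum of $L_{2sf}$ inside a disc of radius $e^{h-\varepsilon}$ for all $|s|\le r$. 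A standard argument — either via a uniform bound on the resolvent $(\zeta I - Q_{2s})^{-1}$ on the circle $|\zeta|=e^{h-\varepsilon'}$ and the Dunford--Taylor functional calculus, or via the fact that $\sup_{|s|\le r}\|Q_{2s}^n\|^{1/n}\to$ (something $\le e^{h-\varepsilon}$) together with submultiplicativity — then yields the required uniform estimate $\|Q_{2s}^m\|\le C e^{m(h-\varepsilon')}$. Once that is in place the rest is the routine Cauchy-estimate bookkeeping indicated above.
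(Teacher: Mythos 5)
Your proof is correct and rests on the same underlying fact as the paper's argument, namely the uniform spectral gap for $Q_{2s}$ near $s=0$, but it packages the conclusion differently. The paper observes that the double power series
$\sum_{g\in\mathfrak C_k}\sum_{m\ge 0} z^{k+2m} Q_{2s}^m\chi_g(1)$
is separately analytic in $z$ (for fixed $s$ with $|s|<\delta_1$, via the spectral radius bound $\limsup_m\|Q_{2s}^m\|^{1/m}\le e^{h-\varepsilon_1}$) and in $s$ (for fixed $z$ with $|z|<e^{-h+\varepsilon_1}$), invokes Hartogs' theorem to promote this to joint analyticity on the polydisk $\{|s|<\delta_1\}\times\{|z|<e^{-h+\varepsilon_1}\}$, and then reads off the growth rate of the coefficients from the radius of convergence in $z$ of the analytic function $\partial_s(\cdot)|_{s=0}$. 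You instead first upgrade the pointwise spectral-radius bound to an explicit uniform estimate $\|Q_{2s}^m\|\le Ce^{m(h-\varepsilon')}$ on a closed disc $|s|\le r$, and then bound the coefficient of $z^{k+2m}$ in the $s$-derivative directly via a Cauchy estimate on the circle $|s|=r$. Both routes are valid. The Hartogs route is slicker in that it does not require the explicit constant $C$ (Hartogs' theorem holds with no local-boundedness hypothesis), whereas your argument is more elementary — no several-complex-variables machinery — at the cost of needing to be careful about uniformity, which you correctly identify as the crux and sketch a standard resolvent / Dunford–Taylor argument to handle. One minor remark: the uniformity you flag is not entirely avoided by the paper either — establishing separate analyticity in $s$ for fixed $z$ (as opposed to mere pointwise convergence of the series) implicitly uses the same local boundedness of the partial sums in $s$ — so the two proofs are closer than they might first appear. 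Your proof is essentially complete as written.
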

\begin{proof}
Suppose that \(s\in \mathbb{C}\) such that \(0\le |s|< \delta_1\) then, 
as discussed in section 3, if \(\delta_1\) is sufficiently small each perturbed 
operator \(L_{2sf}\) has a simple maximal eigenvalue \(e^{P(2sf)}\).
Moreover, for \(|s|<\delta_1\), there exists \(\varepsilon_1(\delta_1)>0\) such that
\begin{equation*}
	\limsup_{m\to\infty} \| Q_{2s}^m \|^{1/m} \le e^{h-\varepsilon_1}.
\end{equation*}

We consider the analyticity of the series
	\begin{equation*}
		\sum_{g\in\mathfrak{C}_k} \sum_{m=0}^\infty z^{k+2m} Q_{2s}^m \chi_g(1).
	\end{equation*}
Suppose that we fix \(z\in\mathbb{C}\) such that \(|z|<e^{-h+\varepsilon_1}\), then 
the series converges for each \(s\in \mathbb{C}\) with \(|s|<\delta_1\).  Meanwhile, 
given \(s\in\mathbb{C}\) such that \(|s|<\delta_1\), the series converges for each 
\(z\in\mathbb{C}\) with \(|z| < e^{-h+\varepsilon_1}\).  Thus, by Hartogs' 
theorem (Theorem 1.2.5, \cite{krantz2001function}), the series converges to an 
analytic function in the 
polydisk 
\(\{s\in \mathbb{C} \colon |s|<\delta_1\}\times \{z\in\mathbb{C}\colon |z| < e^{-h+\varepsilon_1}\}\).  Thus the power series
\begin{equation*} \mleft. \frac{\partial}{\partial s} \sum_{g\in\mathfrak{C}_k} \sum_{m=0}^\infty z^{k+2m} Q_{2s}^m \chi_g(1) \mright|_{s=0}\end{equation*}
is analytic for \(|z|<e^{-h+\varepsilon_1}\) and so we estimate the coefficients of the power series by \(O(e^{m(h - \varepsilon)})\) with \(0<\varepsilon<\varepsilon_1\).
\end{proof}

There is one power series left to study.
\begin{lem}
	Let \(P'(0)\) denote the derivative of the function \(P(sf)\) evaluated at \(s=0\). The coefficient of \(z^{k+2m}\) in the power series 
	\begin{equation*} \mleft. \frac{\partial}{\partial s} \sum_{m=0}^\infty z^{k+2m} e^{mP(2sf)} R_{2s} \chi_g(1) \mright|_{s=0} \end{equation*}
is \(2me^{mh} P'(0) R_{0} \chi_g(1) + 
e^{mh} \mleft. \frac{d}{ds} R_{2s} \chi_g(1)\mright|_{s=0}\).
\end{lem}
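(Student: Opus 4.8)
The plan is to reduce the statement to the term-by-term computation of one derivative. Set $G_m(s) := e^{mP(2sf)}\,R_{2s}\chi_g(1)$, so that the series in question is $\sum_{m=0}^{\infty} z^{k+2m}G_m(s)$, in which each monomial $z^{k+2m}$ occurs exactly once; the coefficient of $z^{k+2m}$ in $\left.\frac{\partial}{\partial s}\sum_m z^{k+2m}G_m(s)\right|_{s=0}$ is then $G_m'(0)$, once we know we may differentiate under the summation sign. I would justify this exactly as in the proof of the preceding lemma: for $|s|<\delta_1$ the map $s\mapsto G_m(s)$ is analytic, since $t\mapsto P(tf)$ is analytic near $t=0$ by Proposition~\ref{derivatives} and $s\mapsto R_s$ is analytic near $s=0$ (Kato perturbation theory, as noted above), while uniformly for such $s$ the quantities $|e^{mP(2sf)}|\,|z|^{k+2m}$ are summable for $|z|$ small enough; so $\sum_m z^{k+2m}G_m(s)$ is separately analytic in $s$ and in $z$ on a bidisk, hence jointly analytic there by Hartogs' theorem, and the interchange is legitimate.

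It then remains to compute $G_m'(0)$ via the product and chain rules. Write $P(0)=h$ for the topological entropy of $\sigma$, so that $e^{mP(2sf)}\big|_{s=0}=e^{mh}$, and $P'(0)$ for the derivative at $t=0$ of $t\mapsto P(tf)$. The chain rule gives
\[
\left.\frac{d}{ds}e^{mP(2sf)}\right|_{s=0}=e^{mh}\cdot m\cdot 2P'(0)=2m\,e^{mh}P'(0),
\]
and, since $s\mapsto R_{2s}\chi_g(1)$ is analytic near $s=0$ with value $R_0\chi_g(1)$ at $s=0$, the product rule applied to $G_m(s)=e^{mP(2sf)}\cdot R_{2s}\chi_g(1)$ and evaluated at $s=0$ produces
\[
G_m'(0)=2m\,e^{mh}P'(0)\,R_0\chi_g(1)+e^{mh}\left.\frac{d}{ds}R_{2s}\chi_g(1)\right|_{s=0},
\]
which is the asserted formula.

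The computation is routine; the only point requiring genuine care is the term-by-term differentiation of the infinite series, which is why I would lean on the Hartogs-theorem argument already set up for the $Q_{2s}^m$ series rather than repeat it. It is worth recording, via Proposition~\ref{derivatives} and Theorem~\ref{equi-holder}, that $P'(0)=\int f\,d\mu_0=\overline F$, so that the leading term $2m\,e^{mh}\overline F\,R_0\chi_g(1)$ already carries the factor $\overline F$ that will appear in the limit in Theorem~\ref{equi-holder}; this is not needed for the present lemma but motivates its form.
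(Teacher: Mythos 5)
Your proof is correct and takes essentially the same route as the paper, which simply writes down the product-and-chain-rule computation term by term; you add an explicit justification of differentiation under the sum (Hartogs-style joint analyticity, as in the preceding lemma), which the paper leaves implicit, but the core calculation $G_m'(0) = 2me^{mh}P'(0)R_0\chi_g(1) + e^{mh}\left.\frac{d}{ds}R_{2s}\chi_g(1)\right|_{s=0}$ is identical.
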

\begin{proof} We have
	\begin{multline*} \mleft. \frac{\partial}{\partial s} \sum_{m=0}^\infty z^{k+2m} e^{mP(2sf)} R_{2s} \chi_g(1) \mright|_{s=0} \\  
		= \sum_{m=0}^\infty z^{k+2m} 2me^{mh} P'(0) R_{0} \chi_g(1) 
		+ \sum_{m=0}^\infty z^{k+2m} e^{mh} \mleft. \frac{d}{ds} R_{2s} \chi_g(1) \mright|_{s=0},
	\end{multline*}
from which the result follows. \end{proof}

Combining the above lemmas, we find that the coefficient of $z^{k+2m}$ in \(\mleft. \tfrac{\partial}{\partial s} \eta_\mathfrak{C}(z,s) \mright|_{s=0}\) satisfies the estimate
	\begin{equation*}
		\sum_{g\in\mathfrak{C}_k} 2me^{mh} P'(0) R_{0} \chi_g(1) + O(e^{mh}).
	\end{equation*}
Returning to Theorem \ref{equi-holder} we now have
	\begin{equation*}
		\frac{1}{\#\mathfrak{C}_{k+2m}} \sum_{x\in \mathfrak{C}_{k+2m}} \frac{F(x)}{k+2m} = \frac{2m}{k+2m} P'(0) \frac{e^{mh}}{\#\mathfrak{C}_{k+2m}} \sum_{g\in \mathfrak{C}_k} R_0\chi_g(1) + O\mleft(\frac{1}{m}\mright).
	\end{equation*}
Thus we have
	\begin{equation*}
		\lim_{m\to\infty} e^{-mh} \sum_{x\in\mathfrak{C}_{k+2m}} \frac{F(x)}{k+2m} = \int f\, d\mu_0 \sum_{g\in\mathfrak{C}_k} R_0\chi_g(1).
	\end{equation*}
If we substitute \(f:\Sigma^*\to\R\) given by \(f(x)=1\) for each \(x\in\Sigma^*\) into the preceding limit we obtain
	\begin{equation*}
		\lim_{m\to\infty} \frac{\#\mathfrak{C}_{k+2m}}{ e^{mh}} = \sum_{g\in\mathfrak{C}_k} R_0 \chi_g(1).
	\end{equation*}
Hence we have the desired result,
	\begin{equation*}
		\lim_{m\to\infty} \frac{1}{\#\mathfrak{C}_{k+2m}} \sum_{x\in\mathfrak{C}_{k+2m}} \frac{F(x)}{k+2m} = \int f\, d\mu_0.
	\end{equation*}

\section{Proof of Theorem \ref{clt-holder}} \label{sec: CLT}

In this section we will prove Theorem \ref{clt-holder}. By Levy's Continuity Theorem
(cf.\ Theorem 2, Chapter XV \S3, 
\cite{feller1971introduction}),
the theorem
will follow if we show that the characteristic functions
\begin{equation*}
		\varphi_m(t) = \frac{1}{\#\mathfrak{C}_{k+2m}} 
		\sum_{x\in \mathfrak{C}_{k+2m}} e^{i t (F(x)-(k+2m)\overline{F})/\sqrt{k+2m}}. 
	\end{equation*}
converge pointwise to $e^{-\sigma_f^2 t^2}$, the characteristic function of the normal 
distribution with mean zero and variance $2\sigma_f^2$.

Suppose that $F$ satisfies \ref{A1}, \ref{A2} and \ref{A3}.
By replacing $F$ with $F - \overline{F}|\cdot|$ (which still satisfies 
the three assumptions) or, equivalently, $f$ with $f - \int f \, d\mu_0$, we 
may assume without loss of generality that $\int f \, d\mu_0=0$.
This reduction does not change the variance.
We may then write
	\begin{equation*}
		\varphi_m(t) = \frac{1}{\#\mathfrak{C}_{k+2m}} \sum_{x\in \mathfrak{C}_{k+2m}} e^{i t f^{k+2m}(x)/\sqrt{k+2m}}.
	\end{equation*}
We recall the approximation, which we obtain from Lemma \ref{lem: approx},
\begin{equation*}
	\exp ( sf^{k+2m}(w^{-1}gw)) = \exp \mleft( s( 2f^m(w) + f^k(g)) \mright) 
	+ s\kappa_w + \xi_w(s),
\end{equation*}
where \(\kappa_w = f^{k+2m}(w^{-1}gw) - 2f^m(w) - f^k(g)\) is uniformly bounded for 
\(w\in \Gamma\) and \(\xi_w(s)\) is an entire function such that \(\xi_w(0)=0\).  
Using the above approximation, we write \(\varphi_m(t)\) as the sum of a leading term 
and an error term:
		\begin{equation*}
			\frac{1}{\# \mathfrak{C}_{k+2m}} \sum_{g\in\mathfrak{C}_{k}} e^{\tau f^k(g) /2} \sum_{w\in \Gamma_m(g)} e^{\tau f^m(w)} + \rho_m(t),
		\end{equation*}
where \(\tau = 2i t/\sqrt{k+2m}\) and the error term \(\rho_m(t)\) is given by
	\begin{equation*}
		\rho_m(t) = \frac{1}{\#\mathfrak{C}_{k+2m}} \sum_{g\in \mathfrak{C}_{k}} \sum_{w\in \Gamma_m(g)} \frac{i t \kappa_w}{\sqrt{k+2m}} 
		+ \xi_w \left(\frac{i t}{\sqrt{k+2m}}\right).
	\end{equation*}
Since the bound on \(\kappa_w\) is uniform and \(\xi_w(0)=0\), we find that \(\rho_m(t) \to 0\) as \(m\to\infty\).  We rewrite the leading term using the transfer operator
as
	\begin{equation*}
		\frac{1}{\# \mathfrak{C}_{k+2m}} \sum_{g\in \mathfrak{C}_k} 
		e^{\tau f^k(g)/2} L_{\tau f}^m \chi_g(1).
	\end{equation*}
For sufficiently large \(m\), the simple maximal eigenvalue \(e^{P(\tau f)}\) of the perturbed operator \(L_{\tau f}\) persists and also plays a crucial role in determining the limit of \(\varphi_m(t)\) as \(m\to\infty\).  Before we establish the limit, we first analyse the pressure function and establish a preliminary limit for \(e^{m(P(\tau f)-h)}\) as \(m\to\infty\).

Recall that the pressure function \(P(sf)\) 
(defined as the principal branch of the logarithm of $e^{P(sf)}$) is analytic in a neighbourhood 
of \(s=0\)
and that $P'(0)=\int f \, d\mu_0=0$.  
By analyticity we can choose \(\delta>0\) such that if \(|s|<\delta\) then
	\begin{equation*}
		P(2sf) 
		= h+ 2 \sigma_f^2 s^2 + s^3\vartheta(s),
	\end{equation*}
for some function \(\vartheta(s)\) that is analytic in a neighbourhood of \(s=0\). 
For sufficiently large \(m\), with \(\tau = 2i t/\sqrt{k+2m}\) as before, we have
	\begin{equation*}
		(k+2m)P\mleft( \tau f \mright) = (k+2m)h - 2\sigma_f^2 t^2 - \frac{4i t^3 \vartheta(\tau)}{3\sqrt{k+2m}}
	\end{equation*}
and so
	\begin{equation*}
		\frac{e^{(k+2m)P(\tau f)}}{e^{(k+2m)h}} = e^{-2\sigma_f^2 t^2} \exp \mleft\{ - \frac{4i t^3 \vartheta(\tau)}{3\sqrt{k+2m}} 	\mright\},
	\end{equation*}
from which the next proposition and corollary follow.

\begin{prop} \label{prop: limitevalue} We have the following limit
	\begin{equation*}
		\lim_{m\to\infty} \frac{e^{(k+2m)P(\tau f)}}{e^{(k+2m)h}} = e^{-2\sigma_f^2 t^2}.
	\end{equation*}
\end{prop}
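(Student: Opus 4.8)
The plan is to deduce the proposition immediately from the identity
\[
\frac{e^{(k+2m)P(\tau f)}}{e^{(k+2m)h}} = e^{-2\sigma_f^2 t^2}\,\exp\mleft\{-\frac{4it^3\vartheta(\tau)}{3\sqrt{k+2m}}\mright\}
\]
displayed just above the statement, the only remaining point being that the second exponential factor tends to $1$ as $m\to\infty$. All the substantive input has already been assembled: the analyticity of $s\mapsto P(sf)$ near $s=0$, the vanishing of the linear term $P'(0)=\int f\,d\mu_0=0$ (by the normalisation adopted earlier in this section), and the identification of the quadratic coefficient via $P''(0)=\sigma_f^2$ from Proposition \ref{derivatives}, which together give the Taylor expansion $P(2sf)=h+2\sigma_f^2 s^2+s^3\vartheta(s)$ with $\vartheta$ analytic in a neighbourhood of the origin.

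First I would fix $t\in\mathbb R$ and recall that here $\tau=\tau_m=2it/\sqrt{k+2m}$, so $\tau_m\to 0$ as $m\to\infty$; hence for all sufficiently large $m$ the value $\tau_m$ lies in the region where the Taylor expansion and the displayed identity are valid, which is all that is needed since the assertion concerns the limit. Since $\vartheta$ is analytic near $0$, it is bounded, say $|\vartheta(s)|\le M$, on some closed disc centred at $0$ containing all $\tau_m$ with $m$ large; consequently
\[
\left|\frac{4it^3\vartheta(\tau_m)}{3\sqrt{k+2m}}\right|\le\frac{4|t|^3 M}{3\sqrt{k+2m}}\longrightarrow 0 .
\]
By continuity of $z\mapsto e^z$ at $0$ the correction factor converges to $1$, and combining this with the displayed identity gives $\lim_{m\to\infty}e^{(k+2m)P(\tau f)}/e^{(k+2m)h}=e^{-2\sigma_f^2 t^2}$, as claimed.

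I do not expect any genuine obstacle: the argument is a routine continuity-and-boundedness estimate once the asymptotic expansion of the pressure is in hand. The one point deserving a word of care is that this expansion holds only for $s$ in a fixed neighbourhood of $0$, so it may be applied to $\tau_m$ only once $m$ is large; since the proposition is itself a limit as $m\to\infty$, this restriction is harmless.
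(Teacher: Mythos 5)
Your proof is correct and is essentially the paper's own argument: both start from the Taylor expansion $P(2sf)=h+2\sigma_f^2 s^2+s^3\vartheta(s)$ (using $P'(0)=0$ from the normalisation and $P''(0)=\sigma_f^2$ from Proposition \ref{derivatives}), substitute $\tau_m=2it/\sqrt{k+2m}$, and observe that the remainder $\exp\{-4it^3\vartheta(\tau_m)/(3\sqrt{k+2m})\}\to 1$ by boundedness of $\vartheta$ near $0$. You merely make explicit the routine continuity-and-boundedness step that the paper leaves as "from which the next proposition and corollary follow."
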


\begin{cor} \label{cor: limitevalue} We have the limit
	\begin{equation*}
		\lim_{m\to\infty} \frac{e^{mP(\tau)}}{e^{mh}} = e^{-\sigma_f^2 t^2}.
	\end{equation*}
\end{cor}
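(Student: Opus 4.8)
The plan is to read off this corollary from the explicit expansion of $(k+2m)P(\tau f)$ established just before Proposition~\ref{prop: limitevalue}, by trading the prefactor $k+2m$ for $m$ at the cost of the multiplier $m/(k+2m) \to 1/2$. Writing $P(\tau) := P(\tau f)$ as elsewhere, for all $m$ large enough that $|\tau| < \delta$ we have from that expansion
\[
m\bigl(P(\tau f) - h\bigr) \;=\; \frac{m}{k+2m}\,\bigl[(k+2m)\bigl(P(\tau f)-h\bigr)\bigr] \;=\; \frac{m}{k+2m}\left(-2\sigma_f^2 t^2 - \frac{4 i t^3 \vartheta(\tau)}{3\sqrt{k+2m}}\right).
\]

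First I would note that, since $\vartheta$ is analytic (hence continuous, hence bounded) in a neighbourhood of $0$ and $\tau = 2it/\sqrt{k+2m} \to 0$ as $m \to \infty$, the quantity $\vartheta(\tau)$ stays bounded, so the term $4it^3\vartheta(\tau)/(3\sqrt{k+2m})$ tends to $0$. Since also $m/(k+2m) \to 1/2$, the displayed right-hand side converges to $\tfrac12 \cdot (-2\sigma_f^2 t^2) = -\sigma_f^2 t^2$. Finally, by continuity of the exponential,
\[
\frac{e^{mP(\tau)}}{e^{mh}} \;=\; e^{\,m(P(\tau f)-h)} \;\longrightarrow\; e^{-\sigma_f^2 t^2},
\]
as claimed.

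I do not expect any genuine obstacle here: the corollary is a bookkeeping consequence of Proposition~\ref{prop: limitevalue} (equivalently, of the expansion preceding it) once the replacement of $k+2m$ by $m$ is tracked carefully. The only point that warrants a sentence is that $\tau$ itself varies with $m$, so one must record that $\vartheta(\tau)$ remains bounded along the sequence — which is immediate from analyticity of $\vartheta$ near the origin and $\tau \to 0$ — and it is cleanest to carry out the passage to the limit additively (at the level of $mP(\tau f)$) rather than by taking an $m/(k+2m)$-th power, so as to avoid any branch ambiguity in the complex logarithm.
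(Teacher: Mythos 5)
Your proof is correct and takes essentially the same approach the paper intends: the paper merely asserts that the Corollary follows from the displayed expansion of $(k+2m)P(\tau f)$, and you carry out the bookkeeping, trading $k+2m$ for $m$ at the cost of the factor $m/(k+2m)\to 1/2$, which halves the variance term. Your remark about working additively at the level of $m(P(\tau f)-h)$ to sidestep branch issues is a sensible precaution, though it is harmless here since $P$ is defined via the principal branch.
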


We use the notation \(\beta(\tau) = e^{P(\tau f)}\) and \(\beta(0)=e^h\) in the proof of 
Proposition \ref{prop: fourier}.

\begin{prop} \label{prop: fourier}  The limit of \(\varphi_m(t)\) as \(m\to\infty\) is \(e^{-\sigma_f^2 t^2}\). 
\end{prop}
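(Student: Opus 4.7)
The plan is to exploit the spectral decomposition of the perturbed transfer operator $L_{\tau f}$ and show that only its leading eigenvalue contributes in the limit. Since it has already been noted that $\rho_m(t)\to 0$, it suffices to analyse
\[
\Phi_m(t) := \frac{1}{\#\mathfrak{C}_{k+2m}}\sum_{g\in\mathfrak{C}_k} e^{\tau f^k(g)/2}L_{\tau f}^m \chi_g(1),\qquad \tau=\frac{2it}{\sqrt{k+2m}}.
\]
Because $\tau\to 0$ as $m\to\infty$, for all sufficiently large $m$ the perturbation theory recalled in Section 3 applies, giving the splitting $L_{\tau f}^m = \beta(\tau)^m R_\tau + Q_\tau^m$ with $R_\tau$ and $Q_\tau$ depending analytically on $\tau$ near $0$, and $\|Q_\tau^m\|\le C e^{m(h-\varepsilon)}$ for some fixed $\varepsilon>0$ uniform in $\tau$ close to $0$.

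First I would dispose of the $Q_\tau^m$ contribution. Using the uniform spectral gap and the fact that $\|\chi_g\|_\theta$ is bounded independently of $g$, we obtain $|Q_\tau^m \chi_g(1)| = O(e^{m(h-\varepsilon)})$. Summing over the finitely many $g\in\mathfrak{C}_k$ and dividing by $\#\mathfrak{C}_{k+2m} = (2p-2)(2p-1)^{m-1}\#\mathfrak{C}_k$, which grows like $e^{mh}$, this contribution is $O(e^{-m\varepsilon})\to 0$.

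Next I would evaluate the limit of the leading term
\[
\frac{\beta(\tau)^m}{\#\mathfrak{C}_{k+2m}}\sum_{g\in\mathfrak{C}_k} e^{\tau f^k(g)/2} R_\tau \chi_g(1).
\]
By analyticity, $e^{\tau f^k(g)/2} R_\tau \chi_g(1) \to R_0 \chi_g(1)$ as $\tau\to 0$. Corollary~\ref{cor: limitevalue} gives $\beta(\tau)^m/e^{mh}\to e^{-\sigma_f^2 t^2}$, and from the computation at the end of Section 4, $e^{mh}/\#\mathfrak{C}_{k+2m}\to \bigl(\sum_{g\in\mathfrak{C}_k} R_0\chi_g(1)\bigr)^{-1}$. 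Combining these three ingredients yields $\lim_{m\to\infty}\Phi_m(t) = e^{-\sigma_f^2 t^2}$, which is precisely the characteristic function of $N(0,2\sigma_f^2)$ evaluated at $t$.

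The main obstacle to watch for is ensuring that the splitting $L_{\tau f}^m = \beta(\tau)^m R_\tau + Q_\tau^m$ and the spectral gap bound are uniform in $\tau$ in a way compatible with the simultaneous limit $m\to\infty$. This follows from Kato perturbation theory together with upper semi-continuity of the spectral radius, but some care is required to verify that the bounds depend continuously on $\tau$ in a fixed neighbourhood of zero large enough to contain the tail of the sequence $\tau_m=2it/\sqrt{k+2m}$; once this is in hand, the rest of the argument reduces to assembling the previously proven limits.
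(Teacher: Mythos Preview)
Your proposal is correct and follows essentially the same route as the paper's own proof: decompose $L_{\tau f}^m=\beta(\tau)^m R_\tau + Q_\tau^m$, kill the $Q_\tau^m$ contribution via the spectral gap, and combine Corollary~\ref{cor: limitevalue} with the Section~4 limit $e^{mh}/\#\mathfrak{C}_{k+2m}\to\bigl(\sum_{g}R_0\chi_g(1)\bigr)^{-1}$ to evaluate the leading term. The only cosmetic difference is that the paper bounds $Q_\tau^m$ relative to $\beta(\tau)^m$ (writing $\|\beta(\tau)^{-m}Q_\tau^m\|=O(\kappa^m)$) rather than to $e^{mh}$ as you do, but the structure and the uniformity concern you flag are the same.
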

\begin{proof}
Written in terms of the transfer operator and a null sequence \((\rho_m(t))_{m=0}^\infty\), the function \(\varphi_m(t)\) is equal to 
	\begin{equation*}
		\frac{1}{\# \mathfrak{C}_{k+2m}} \sum_{g\in \mathfrak{C}_k} 
		e^{\tau f^k(g)/2} L_{\tau f}^m \chi_g(1) +\rho_m(t).
	\end{equation*}
We recall the decomposition of the transfer operator into \(L_{sf} = \beta(s) R_s + Q_s\).
For sufficiently large \(m\),  the leading term is given by 
	\begin{equation*}
		\frac{\beta(\tau)^m}{\# \mathfrak{C}_{k+2m}} \sum_{g\in \mathfrak{C}_k} e^{\tau f^k(g)/2} R_\tau \chi_g(1)  + \frac{1}{\# \mathfrak{C}_{k+2m}} \sum_{g\in \mathfrak{C}_k} e^{\tau f^k(g)/2} Q_\tau^m \chi_g(1).
	\end{equation*} 

 Since the spectral radius of \(Q_\tau\) is strictly less than \(|\beta(\tau)|\), we find \(\| \beta(\tau)^{-m} Q_\tau^m\| = O(\kappa^m)\) for some \(\kappa\in (0,1)\) and so we have
	\begin{equation*}
		\frac{1}{\#\mathfrak{C}_{k+2m}} \sum_{g\in\mathfrak{C}_k} e^{\tau f^k(g)/2} Q_\tau^m \chi_g(1) = O\mleft( \frac{\beta(\tau)^m}{\beta(0)^m} \kappa^m	\mright).
	\end{equation*}
By Corollary \ref{cor: limitevalue} we have \(\lim_{m\to\infty} \beta(\tau)^m/\beta(0)^m = e^{-\sigma_f^2 t^2}\) and so
	\begin{equation*}
		\lim_{m\to\infty} \frac{1}{\#\mathfrak{C}_{k+2m}} \sum_{g\in\mathfrak{C}_k} e^{\tau f^k(g)/2} Q_\tau^m \chi_g(1) =0.
	\end{equation*}

We now turn our attention to the asymptotics for the term
	\begin{equation*}
		\frac{\beta(\tau)^m}{\#\mathfrak{C}_{k+2m}} \sum_{g\in \mathfrak{C}_{k}} e^{\tau f^k(g)/2} R_{\tau} \chi_g(1).
	\end{equation*}
In order to approximate this term, we first write the projection \(R_\tau\) in terms of \(R_0\).  Since the projection is analytic for $\tau$ in a neighbourhood of \(0\) we have, 
for sufficiently large \(m\), 
\(e^{\tau f^k(g)/2} R_\tau \chi_g(1) = R_0 \chi_g (1) + O(t/\sqrt{k+2m})\).  
We recall that \(\#\mathfrak{C}_{k+2m} = (\beta(0)-1)\beta(0)^{m-1}\#\mathfrak{C}_k\) and so
	\begin{equation*}
			\frac{\beta(\tau)^m}{\#\mathfrak{C}_{k+2m}} 
			\sum_{g\in \mathfrak{C}_{k}} e^{\tau f^k(g)/2} R_{\tau} \chi_g(1) = \frac{\beta(\tau)^m}{\#\mathfrak{C}_{k+2m}} \sum_{g\in\mathfrak{C}_k} R_0\chi_g(1) + O\mleft( \frac{\beta(\tau)^m t}{\beta(0)^{m}\sqrt{k+2m}}\mright). 
	\end{equation*}

We recall the limit
	\begin{equation*}
		\lim_{m\to\infty} \frac{\#\mathfrak{C}_{k+2m}}{\beta(0)^m} = \sum_{g\in\mathfrak{C}_k} R_0 \chi_g (1)
	\end{equation*}
and so, together with the above approximation, we find the limit of \(\varphi_m(t)\) as \(m\to\infty\) is given by
	\begin{equation*}
		 \lim_{m\to\infty} \frac{\beta(\tau)^m}{\#\mathfrak{C}_{k+2m}} \sum_{g\in \mathfrak{C}_{k}} e^{\tau f^k(g)/2} R_{\tau} \chi_g(1) = \lim_{m\to \infty} \frac{\beta(\tau)^m}{\beta(0)^m} = e^{-\sigma_f^2 t^2},
	\end{equation*}
which is the desired result.
\end{proof}

\end{document}